\documentclass[a4paper]{amsart}

\usepackage{packages}
\usepackage{style}
\usepackage{notations}

\begin{document}

\title{An arithmetic Zariski 4-tuple of twelve lines}
\author{Benoît Guerville-Ballé}
\address{
Institut Joseph Fourier,\newline\indent
UMR 5582 CNRS-UJF, \newline\indent
Université Grenoble Alpes \newline\indent
38 000 Grenoble, France\newline\indent
}
\email{benoit.guerville-balle@math.cnrs.fr}

\thanks{Partially supported by JSPS-MAE Sakura Program}				

\subjclass[2010]{32S22, 32Q55, 54F65}		

\begin{abstract}
	Using the invariant developed in~\cite{ArtFloGue}, we differentiate four arrangements with the same combinatorial information but in different deformation classes. From these arrangements, we construct four other arrangements such that there is no orientation-preserving homeomorphism between them. Furthermore, some couples of arrangements among this 4-tuplet form new arithmetic \textsc{Zariski} pairs, \emph{i.e.} a couple of arrangements conjugate in a number field with the same combinatorial information but with different embedding topology in $\CC\PP^2$.
\end{abstract}

\maketitle

\section{Introduction}

The study of the relation between topology and algebraic geometry was initiated by F.~\textsc{Klein} and H.~\textsc{Poincar\'e} at the beginning of the XX century. It was known by the work of O.~\textsc{Zariski}~\cite{Zar_Problem,Zar_Irregularity,Zar_Poincare} that the topology of the embedding of an algebraic curve $C$ in the complex projective plane $\CC\PP^2$ is not determined by the local topology of its singular points. Indeed, he used the fundamental group of their \emph{complement} $E_{C_i}=\CC\PP^2\setminus C_i$  as a strong topological invariant to prove that two sextics $C_1,C_2$ have the same \emph{combinatorial information} but different \emph{topological types}. Such couple of curves was called a \textsc{Zariski} pair by E.~\textsc{Artal}~\cite{Art_Couples}. There are many examples of \textsc{Zariski} pairs (or larger $k$-tuplets) that have been discovered (see for examples~\cite{ArtCogTok,CasEyrOka_Topology,Deg_Deformations,Oka_Transforms,Shimada_Fundamental}), while essentially two \textsc{Zariski} pairs of \emph{line arrangements} have been known. The first is a couple of arrangements with complex equations constructed by G.~\textsc{Rybnikov} \cite{Ryb_Preprint,Ryb_Article,ArtCarCogMar_Rybnikov}, and the second is a pair of complexified real arrangements obtained by E.~\textsc{Artal}, J.~\textsc{Carmona}, J.I.~\textsc{Cogolludo} and M.A.~\textsc{Marco}~\cite{ArtCarCogMar_Topology}. The proof of the former is done using the lower central series of the fundamental group, and the latter using the braid monodromy. Thus, in both, the use of a computer were mandatory. This small number of examples for line arrangements, and the routine use of a computer show the difficulty to sense what characterizes the topological type of an arrangement.

In the present paper, new counterexamples to the combinatoriality of the topological type of arrangements are explicitly constructed. These arrangements are defined in the 10\up{th} cyclotomic field, and their equations are connected by the action of an element of the \textsc{Galois} group of $\QQ(\zeta_{10})$. These new pairs are arithmetic \textsc{Zariski} pairs, in particular their fundamental groups have the same profinite completion (\emph{i.e.} the same finite quotients). In contrast with~\cite{Ryb_Preprint}, and as in~\cite{ArtCarCogMar_Topology}, the major part of the computations needed to single out these arrangements are doable by hand.

In~\cite{ArtFloGue}, the authors construct a new topological invariant of line arrangements $\mathcal{I}(\A,\xi,\gamma)$ based on the inclusion map of the \emph{boundary manifold} (that is the boundary of a regular neighbourhood of the arrangement) in the complement. This invariant depends on a character of the fundamental group of the complement and a special cycle in the incidence graph of the arrangement; and it can be computed directly from the \emph{wiring diagram} of the arrangement. In this paper, we use this invariant to single out four arrangements (two couples of complex conjugate arrangements) with the same combinatorial structure but lying in different deformation classes (\emph{i.e.} an oriented and ordered \textsc{Zariski} $4$-tuplet). Combinatorially, they contain eleven lines, four points of multiplicity four, six triple points and some double points. Then, to delete all automorphisms of the combinatorics, we add a twelfth line to these arrangements (as in~\cite{ArtCarCogMar_Topology}). Thus we construct new \textsc{Zariski} pairs. 

Currently, we do not know if they have isomorphic fundamental groups. However, the invariant $\mathcal{I}(\A,\xi,\gamma)$ allows us to compute the quasi-projective part of the characteristic varieties; more precisely to determine the quasi-projective depth of the character $\xi$ (see~\cite{Art_Singularities, Gue_Thesis}). Unfortunately in the present case they are equal.  Furthermore, the combinatorial structure of the arrangements satisfies the hypotheses of \cite{DimIbaMac}, so the projective part of the characteristic varieties is combinatorially determined. Thus the question of the combinatoriality of the characteristic varieties is still open.

In the Section~\ref{Section_Arrangements}, we give usual definitions and define arrangements $\mathcal{N}^+$, $\mathcal{N}^-$, $\mathcal{M}^+$ and $\mathcal{M}^-$ forming the oriented and ordered \textsc{Zariski} $4$-tuplet.  In Section~\ref{Section_Pairs}, we apply a classical argument to the arrangements $\mathcal{N}^+$, $\mathcal{N}^-$, $\mathcal{M}^+$ and $\mathcal{M}^-$ to construct the new examples of arithmetic \textsc{Zariski} pairs. The last section is divided into two parts. In the first one, we recall the construction and the definition of the invariant $\mathcal{I}(\A,\xi,\gamma)$; in the second one, we give the wiring diagrams of $\mathcal{N}^+$ and $\mathcal{M}^+$ required to compute the invariant together with the character $\xi$ and the cycle $\gamma$ allowing us to distinguish them. Then we compute the invariant for the four arrangements, thus we prove that $(\mathcal{N}^+,\mathcal{N}^-,\mathcal{M}^+,\mathcal{M}^-)$ forms an oriented and ordered \textsc{Zariski} $4$-tuplet.

\section{The arrangements}\label{Section_Arrangements}

In this section, a brief recall on combinatorics and realization is given (see~\cite{OrlTer} for definitions of classical objects) , together with the description of the arrangements allowing to construct the new example of \textsc{Zariski} pairs.

\subsection{The combinatorics}\label{Subsection_Combinatorics}\mbox{}

\begin{de}
	A \emph{combinatorial type} (or simply a combinatorics) is a couple $\C=(\L,\P)$, where $\L$ is a finite set and $\P $ a subset of the power set of $\L$, satisfying that:
	\begin{itemize}
		\item For all $P \in \P,\ \sharp P \geq 2$;
		\item For any $L_1,L_2 \in \L,\ L_1\neq L_2,\ \exists ! P\in \P$ such that $L_1,L_2\in P$.
	\end{itemize}
An \emph{ordered combinatorics} $\C$ is a combinatorics where $\L$ is an ordered set.
\end{de}

\begin{de}
	Let $\C=(\L,\P)$ be a combinatorics. An \emph{automorphism} of $\C$ is a permutation of $\L$ preserving~$\P$. The set of such automorphisms is the \emph{automorphism group} of  the combinatorics $\C$.
\end{de}

The combinatorics can be encoded in the incidence graph, which is a subgraph of the Hasse diagram:

\begin{de}\label{Definition_IncidenceGraph}
	 The \emph{incidence graph} $\Gamma_{\C}$ of a combinatorics $\C=(\L,\P)$ is a non-oriented bipartite graph where the set of vertices $V(\C)$ is decomposed in two disjoint sets:
	\begin{equation*}
		V_\P(\C)=\{v_P\mid P\in \P\}\quad \text{and}\quad
		V_\L(\C)=\{v_L\mid L\in \L\}.
	\end{equation*}
	An edge of $\Gamma_{\C}$ joins $v_L\in V_\L(\C)$ to $v_P\in V_\P(\C)$ if and only if $L\in P$. 
\end{de}

\begin{rmk}
	The automorphism group of $\C$ is isomorphic to the group of automorphism of $\Gamma_\C$ respecting the structure of bipartite graph (that is sending $V_\P(\C)$ (resp. $V_\L(\C)$) on itself). Generally it is smaller than the automorphism group of the graph.
\end{rmk}

The starting point to construct and to detect the new example of \textsc{Zariski} pairs is the combinatorics $\mathcal{K}=(\L,\P)$ (obtained from a study of the combinatorics with 11 lines) defined by $\L=\set{L_1,\cdots,L_{11}}$ and 
\begin{multline*}
\P=\left\{\set{1, 2}, \set{1, 3, 5, 7}, \set{1, 4, 6, 8}, \set{1, 9}, \set{1, 10, 11}, \set{2, 3, 6, 9}, \set{2, 4, 5, 10}\right. , \\
\set{2, 7, 11}, \set{2, 8}, \set{3, 4}, \set{3, 8, 11}, \set{3, 10}, \set{4, 7}, \set{4, 9, 11}, \set{5, 6}, \\
\left. \set{5, 8, 9}, \set{5, 11}, \set{6, 7, 10}, \set{6, 11}, \set{7, 8}, \set{7, 9}, \set{8, 10}, \set{9, 10} \right\}.
\end{multline*}

\begin{propo}
	The automorphism group of $\mathcal{K}$ is cyclic of order 4, and is generated by:
	\begin{equation*}
		\sigma=(1\ 3\ 2\ 4)(5\ 6)(7\ 9\ 10\ 8).
	\end{equation*} 
\end{propo}

\begin{proof}
	Let $\phi$ be an automorphism of $\mathcal{K}$. The line $L_{11}$ is the only one containing 4 triple points, thus it is fixed by the $\phi$. Since $L_5$ and $L_6$ are the only one intersecting $L_{11}$ in a double point, then they are in distinct $\phi$-orbits than the other lines. By the same way, the lines $L_1$, $L_2$, $L_3$ and $L_4$ contain 2 double points, one triple point and 2 quadruple points, thus they are in $\phi$-orbits distinct of the other lines. The same argument work for the lines $L_7$, $L_8$, $L_9$ and $L_{10}$. Thus the decomposition of $\set{1,\cdots,11}$ in $\phi$-orbits is a sub-decomposition of:
	\begin{equation}\label{decompo}\tag{I}
		\set{1,2,3,4} \sqcup \set{5,6} \sqcup \set{ 7,8,9,10} \sqcup \set{11}.
	\end{equation}
	We decompose the following in two parts.
	
	First, we assume that $\phi(5)=6$ and $\phi(6)=5$. Then $\sigma(\set{5,8,9})=\set{6,7,10}$ and thus $\phi(\set{8,9})=\set{7,10}$. 
	\begin{enumerate}
	
		\item If $\phi(8)=7$ and $\phi(9)=10$ then $\phi(\set{3,8,11})=\set{2,7,11}$ and thus $\phi(3)=2$. Using this and decomposition~\ref{decompo}, we have $\phi(\set{3,10})=\set{2,8}$ and then $\phi(10)=8$. To finish, using the sets $\set{1,9}$, $\set{2,8}$, $\set{3,10}$ and $\set{4,7}$ and decomposition~\ref{decompo}, we obtain that $\phi=(1\ 3\ 2\ 4)(5\ 6)(7\ 9\ 10\ 8)$.
		
		\item If $\phi(8)=10$ and $\phi(9)=7$ then by the same way we obtain that $\phi=(4\ 2\ 3\ 1)(5\ 6)(8\ 10\ 9\ 7)$. 
		
	\end{enumerate}
	
	Second, we assume that $\phi(5)=5$ and $\phi(6)=6$. Then the 4 quadruple points and decomposition~\ref{decompo} imply that $\phi(\set{7,10})=\set{7,10}$ and $\phi(\set{8,9})=\set{8,9}$.
	\begin{enumerate}
	
		\item If $\phi_{\mid\set{7,8,9,10}}=\Id$ then the sets $\set{1,9}$, $\set{2,8}$, $\set{3,10}$, $\set{4,7}$ and decomposition~\ref{decompo} imply that $\phi=\Id$.
		
		\item If $\phi_{\mid\set{7,10}}=\Id$, $\phi(8)=9$ and $\phi(9)=8$ then the sets $\set{1,9}$, $\set{2,8}$, $\set{3,10}$ and $\set{4,7}$ and decomposition~\ref{decompo} imply that $\phi(2)=1$. Thus we have $\phi(\set{2,7,11})=\set{1,10,11}$ and then $\phi(7)=10$, which is impossible.
		
		\item If $\phi_{\mid\set{8,9}}=\Id$, $\phi(7)=10$ and $\phi(10)=7$ then by the same way we also obtain a contradiction. 
		
		\item If $\phi(7)=10$, $\phi(8)=9$, $\phi(9)=8$ and $\phi(10)=7$ then the sets $\set{1,9}$, $\set{2,8}$, $\set{3,10}$, $\set{4,7}$ and decomposition~\ref{decompo} imply that $\phi=(1\ 2)(3\ 4)(7\ 10)(8\ 9)$.
	
	\end{enumerate}
	We obtain that the automorphism group of $\mathcal{K}$ is the cyclic group generated by $\sigma=(1\ 3\ 2\ 4)(5\ 6)(7\ 9\ 10\ 8)$.
	
\end{proof}

\begin{rmk}
	The set $\P$ of $\mathcal{K}$ is decomposed in 8 orbits by the action of its automorphism group:
	\begin{itemize}
		\item[-] the four points of multiplicity 4,
		\item[-] the four points of multiplicity 3 of $L_{11}$,
		\item[-] the two other points of multiplicity 3,
		\item[-] two orbits with 2 double points,
		\item[-] two orbits with 4 double points,
		\item[-] a single isolated orbit composed of the intersection point of $L_5$ and $L_6$,
	\end{itemize}
	and the set $\L$ of $\mathcal{K}$ is decomposed in 4 orbits : $\set{L_1,L_2,L_3,L_4}\sqcup\set{L_5,L_6}\sqcup\set{L_7,L_8,L_9,L_{10}}\sqcup\set{L_{11}}$.
\end{rmk}

\subsection{Complex realizations}\label{Subsection_Realization}\mbox{}

The following definitions are given on the field of the complex numbers. 


\begin{de}
	Let $\A$ be a line arrangement. The combinatorics of $\A$ is the poset of all the intersection of the elements in $\A$, with respect to reverse inclusion.
\end{de}

\begin{rmk}
	The combinatorics of $\A$ encodes the information of which singular point is on which line.
\end{rmk}

\begin{de}
	Let $\C$ be a combinatorics. A complex line arrangement $\A=\set{L_1,\cdots,L_n}$ of $\CC\PP^2$ is a \emph{realization} of $\C$  if its combinatorics agrees with $\C$. An \emph{ordered realization} of an ordered combinatorics is defined accordingly.
\end{de}

\begin{notation}
	If $\A$ is a realization of a combinatorics $\C$, then the incidence graph is also denoted by $\Gamma_\A$.
\end{notation}

\begin{ex}
 The incidence graph of a generic arrangement with 3 lines is the cyclic graph with 6 vertices. Its automorphism group is the dihedral group $\text{D}_3$.
\end{ex}

Using the fact that three lines are concurrent if and only if the determinant of their coefficients is null, it is simple to verify that:

\begin{propo}
	The arrangements defined by the following equations admit $\mathcal{K}$ as combinatorics:
	\begin{equation*}
		\begin{array}{l p{1cm} l }
			L_1:\ z=0 && L_2:\ x+y-z=0 \\
			L_3:\ x=0 && L_4:\ y=0 \\ 
			L_5:\ x-z=0 && L_6:\ y-z=0 \\
			L_7:\ -\alpha^3 x+z=0 && L_8:\ y-\alpha z=0 \\ 
			L_9:\ (\alpha-1) x-y+z=0 && L_{10}:\ -\alpha(\alpha-1) x+y+\alpha(\alpha-1) z=0 \\ 
			L_{11}:\ -\alpha(\alpha-1) x+y-\alpha z=0 &&
		\end{array}
	\end{equation*}
	where $\alpha$ is a root of the 10\up{th} cyclotomic polynomial $\Phi_{10}(X)=X^4-X^3+X^2-X+1$.
\end{propo}

We denote by $\mathcal{N}^+$, (resp. $\mathcal{M}^+$) the arrangement for which $\alpha\simeq -0.31 + 0.95 i$ (resp.  $\alpha\simeq 0.81 + 0.59 i
$), and  $\mathcal{N}^-$  (resp. $\mathcal{M}^-$) its complex conjugate arrangement.

\begin{rmk}
	The end of this paper (see Theorem~\ref{Thm_OrientedOrderedPair} and Subsection~\ref{Subsection_Computation}) will prove that they ($\mathcal{N}^+$, $\mathcal{M}^+$, $\mathcal{N}^-$ and $\mathcal{M}^-$) are representatives of the four connected components of the order moduli space, see~\cite{ArtCarCogMar_Topology}. Thus, these connected components admit representatives with complex equations in the ring of polynomials over the 10\up{th} cyclotomic fields. Their equations are linked by an element of the \textsc{Galois} group of $\QQ(\zeta_{10})$.
\end{rmk}

\begin{de}
	The \emph{topological type} of an arrangement $\A$ is the homeomorphism type of the pair $(\CC\PP^2,\A)$. If the homeomorphism preserves the orientation of $\CC\PP^2$, then we have \emph{oriented} topological type; and it is \emph{ordered}, if $\A$ is ordered and the homeomorphisms preserve this order.
\end{de}

\begin{rmk}
	If two ordered arrangements with the same combinatorics have different oriented and ordered topological type then they are in distinct ambient isotopy classes. The \textsc{MacLane} arrangements \cite{Mac} are the first such examples.
\end{rmk}

With these definitions, we can state the main results of the paper:

\begin{thm}\label{Thm_OrientedOrderedPair}
	There is no homeomorphism preserving both orientation and order between any two pairs among $(\CC\PP^2,\mathcal{N}^+)$, $(\CC\PP^2,\mathcal{N}^-)$, $(\CC\PP^2,\mathcal{M}^+)$ and $(\CC\PP^2,\mathcal{M}^-)$.
\end{thm}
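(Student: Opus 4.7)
The plan is to deploy the topological invariant $\mathcal{I}(\A,\xi,\gamma)$ from~\cite{ArtFloGue} that the introduction has set up. Since this invariant depends only on the oriented and ordered homeomorphism type of the pair $(\CC\PP^2,\A)$, it suffices to exhibit a single pair $(\xi,\gamma)$, consisting of a character on $\pi_1(\CC\PP^2\setminus\A)$ and a cycle in the incidence graph $\Gamma_{\mathcal{K}}$, for which the four complex values
\[
\mathcal{I}(\mathcal{N}^+,\xi,\gamma),\quad \mathcal{I}(\mathcal{N}^-,\xi,\gamma),\quad \mathcal{I}(\mathcal{M}^+,\xi,\gamma),\quad \mathcal{I}(\mathcal{M}^-,\xi,\gamma)
\]
are pairwise distinct. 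Indeed, any order- and orientation-preserving homeomorphism between two of these pairs would transport $\gamma$ and $\xi$ consistently and force the corresponding invariants to agree.

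First I would extract from the explicit equations of Subsection~\ref{Subsection_Realization} a wiring diagram for $\mathcal{N}^+$ and one for $\mathcal{M}^+$. Although both arrangements realise the same combinatorics $\mathcal{K}$, the real parts of the singular abscissae occur in different orders for the two \textsc{Galois} conjugate values $\alpha\simeq -0.31+0.95i$ and $\alpha\simeq 0.81+0.59i$, so the two wiring diagrams are genuinely different braids. The wiring diagrams for the conjugate arrangements $\mathcal{N}^-$ and $\mathcal{M}^-$ are then simply the mirrors of these, because complex conjugation on an affine chart of $\CC\PP^2$ is an orientation-reversing involution that reflects the wiring diagram horizontally.

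Next I would search, in the incidence graph $\Gamma_{\mathcal{K}}$ together with the set of torsion characters of $\pi_1(\CC\PP^2\setminus\A)$ landing in roots of unity of $\QQ(\zeta_{10})$, for a pair $(\xi,\gamma)$ whose invariant is non-real and is not fixed by the combinatorial automorphism $\sigma$ of $\mathcal{K}$. The non-reality will separate each $\mathcal{N}^+$ (resp.\ $\mathcal{M}^+$) from its conjugate, since reflecting the wiring diagram conjugates the value of $\mathcal{I}$; and the \textsc{Galois} action in $\QQ(\zeta_{10})$ that exchanges the two choices of $\alpha$ will separate the $\mathcal{N}$-class from the $\mathcal{M}$-class. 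One then runs the combinatorial formula of~\cite{ArtFloGue} on the four wiring diagrams and checks that the four values are pairwise distinct, which is a finite verification in the cyclotomic field $\QQ(\zeta_{10})$.

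The main obstacle is locating a good $(\xi,\gamma)$. Because $\mathcal{K}$ has $23$ multiple points and a non-trivial automorphism $\sigma$ of order four, many natural cycles and many natural characters lead either to a vanishing invariant or to values that happen to coincide on some of the four arrangements; only by avoiding $\sigma$-symmetric data and by exploiting the imaginary part arising from the wiring crossings can one produce a single pair $(\xi,\gamma)$ that simultaneously separates all four classes.
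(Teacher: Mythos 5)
Your proposal follows the same strategy as the paper: exhibit a single pair $(\xi,\gamma)$ for which the invariant $\mathcal{I}(\A,\xi,\gamma)$ of~\cite{ArtFloGue} takes four pairwise-distinct values, computing those values from wiring diagrams and using the compatibility of $\mathcal{I}$ with complex conjugation to treat $\mathcal{N}^-$ and $\mathcal{M}^-$. The paper makes this concrete with the cycle $\gamma_{(5,6,11)}$ and the character $\xi:(L_1,\dots,L_{11})\mapsto(\zeta,\zeta^4,\zeta^3,\zeta^2,1,1,\zeta,\zeta^2,\zeta^3,\zeta^4,1)$, $\zeta$ a primitive fifth root of unity, and obtains the values $\zeta,\zeta^4,\zeta^2,\zeta^3$.

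Two points in your sketch need attention. First, you omit the requirement that $(\A,\xi,\gamma)$ be \emph{inner-cyclic}, which is not optional: the invariant $\mathcal{I}(\A,\xi,\gamma)$ is only well defined (independent of the choice of nearby cycle $\tilde\gamma$) under the three conditions of Definition~\ref{Definition_InnerCyclic}. So the ``search'' is not over arbitrary $(\xi,\gamma)$ but over the sharply constrained data making the triple inner-cyclic; fortunately this condition is purely combinatorial, so checking it once for $\mathcal{K}$ covers all four realizations. Second, the heuristic that ``the Galois action in $\QQ(\zeta_{10})$ will separate the $\mathcal{N}$-class from the $\mathcal{M}$-class'' has no a priori content: the Galois substitution $a\mapsto -a^2$ is not induced by any self-homeomorphism of $\CC\PP^2$, so it imposes no constraint whatever on $\mathcal{I}$. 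Only complex conjugation has that topological meaning, and it accounts only for $\mathcal{I}(\mathcal{N}^-)=\overline{\mathcal{I}(\mathcal{N}^+)}$ and $\mathcal{I}(\mathcal{M}^-)=\overline{\mathcal{I}(\mathcal{M}^+)}$. The separation of $\mathcal{N}^+$ from $\mathcal{M}^+$ (and hence all remaining pairs) comes solely from the explicit braid computations on the two genuinely different wiring diagrams, which is exactly what the paper carries out and which your proposal correctly identifies as the final verification step.
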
	

\begin{rmk}
	The complex conjugation induces an orientation-reversing homeomorphism between $(\CC\PP^2,\mathcal{N}^+)$ and $(\CC\PP^2,\mathcal{N}^-)$, and between $(\CC\PP^2,\mathcal{M}^+)$ and $(\CC\PP^2,\mathcal{M}^-)$ too.
\end{rmk}

\begin{cor}\label{Cor_OrderedPair}
	There is no order-preserving homeomorphism between $(\CC\PP^2,\mathcal{N}^+)$ (or $(\CC\PP^2,\mathcal{N}^-)$) and $(\CC\PP^2,\mathcal{M}^+)$ (or $(\CC\PP^2,\mathcal{M}^-)$).
\end{cor}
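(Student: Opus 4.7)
The plan is to deduce Corollary~\ref{Cor_OrderedPair} from Theorem~\ref{Thm_OrientedOrderedPair} by using complex conjugation to absorb any orientation reversal. The key inputs will be the remark immediately following Theorem~\ref{Thm_OrientedOrderedPair}, plus the (essentially tautological) observation that complex conjugation respects the chosen labelling of lines.

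I would argue by contradiction. Suppose there is an order-preserving homeomorphism $h\colon(\CC\PP^2,\mathcal{N}^{\varepsilon})\to(\CC\PP^2,\mathcal{M}^{\varepsilon'})$ for some signs $\varepsilon,\varepsilon'\in\{+,-\}$. Split into two cases according to whether $h$ preserves or reverses the orientation of $\CC\PP^2$. If $h$ preserves orientation, then $h$ is already an orientation-preserving, order-preserving homeomorphism between two of the four pairs, which directly contradicts Theorem~\ref{Thm_OrientedOrderedPair}.

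If instead $h$ reverses orientation, I would post-compose with complex conjugation $c\colon(\CC\PP^2,\mathcal{M}^{\varepsilon'})\to(\CC\PP^2,\mathcal{M}^{-\varepsilon'})$. The remark after Theorem~\ref{Thm_OrientedOrderedPair} gives that $c$ is an orientation-reversing homeomorphism between the two complex conjugate pairs. Furthermore, since the equations defining $\mathcal{M}^+$ and $\mathcal{M}^-$ differ only by the substitution $\alpha\mapsto\bar\alpha$ while keeping the same indexing, $c$ sends the line $L_i$ of $\mathcal{M}^{\varepsilon'}$ to the line $L_i$ of $\mathcal{M}^{-\varepsilon'}$, so $c$ is order-preserving. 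Then $c\circ h$ is an orientation-preserving, order-preserving homeomorphism from $(\CC\PP^2,\mathcal{N}^{\varepsilon})$ to $(\CC\PP^2,\mathcal{M}^{-\varepsilon'})$, again contradicting Theorem~\ref{Thm_OrientedOrderedPair}.

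There is no substantive obstacle: the only thing to check carefully is that complex conjugation is order-preserving, and this is built into the definition of $\mathcal{M}^-$ (and of $\mathcal{N}^-$) as the conjugate arrangement with the same labelling. The entire content of the corollary is then a two-line case analysis on the orientation behaviour of~$h$.
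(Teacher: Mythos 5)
Your proof is correct and is essentially the same as the paper's. The paper invokes a lemma (cited from Artal--Carmona--Cogolludo--Marco, Theorem~4.19) whose content is precisely your two-case analysis: post-composing an order-preserving, orientation-reversing homeomorphism with complex conjugation yields an order- and orientation-preserving one, which is then ruled out by Theorem~\ref{Thm_OrientedOrderedPair}; you have simply unpacked that lemma's proof inline.
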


The proofs of both are presented in Subsection~\ref{Subsection_Computation}.

\section{Zariski pairs}\label{Section_Pairs}

The principal problem which appears while working with the previous combinatorics $\mathcal{K}$ is that its automorphism group is not trivial. Indeed, this group is cyclic of order 4 and is generated by the permutation:
\begin{equation*}
	\sigma=(1\ 3\ 2\ 4)(5\ 6)(7\ 9\ 10\ 8).
\end{equation*} 
This automorphism induces the automorphism by the \textsc{Galois} group of $\QQ(\zeta_{10})$: $a\mapsto -a^2$, where $a$ is a primitive root of unity. The change of variables $(x,y,z)\mapsto (z,x+y-z,y)$ realizes this automorphism of the combinatorics. It permutes cyclically the arrangements $\mathcal{N}^+$, $\mathcal{M}^+$, $\mathcal{N}^-$ and $\mathcal{M}^-$.

To remove the hypothesis ``order-preserving'' in Corollary~\ref{Cor_OrderedPair}, we use the same argument as in~\cite{ArtCarCogMar_Topology}: we consider additional lines to the previous combinatorics to reduce the automorphism group of the combinatorics to the trivial group. Let us consider the combinatorics $\mathfrak{K}=(\mathfrak{L},\mathfrak{P})$ obtained from $\mathcal{K}$ by adding a line $L_{12}$ at $\L$ passing through the point $L_1 \cap L_3 \cap L_5 \cap L_7$ and generic with the other lines, that is $\mathfrak{L}=\set{L_1,\cdots,L_{12}}$ and
\begin{multline*}
	\mathfrak{P} = \left\{\ \set{1, 2}, \set{1, 3, 5, 7, 12}, \set{1, 4, 6, 8}, \set{1, 9}, \set{1, 10, 11}, \set{2, 3, 6, 9},\right. \\
	\set{2, 4, 5, 10}, \set{2, 7, 11}, \set{2, 8}, \set{2, 12}, \set{3, 4}, \set{3, 8, 11}, \set{3, 10}, \set{4, 7}, \\
	\set{4, 9, 11}, \set{4, 12}, \set{5, 6}, \set{5, 8, 9}, \set{5, 11}, \set{6, 7, 10}, \set{6, 11}, \set{6, 12}, \\
	\left. \set{7, 8}, \set{7, 9}, \set{8, 10}, \set{8, 12}, \set{9, 10}, \set{9, 12}, \set{10, 12}, \set{11, 12}\ \right\}.
\end{multline*}
It admits four realizations denoted by $\mathfrak{N}^+$, $\mathfrak{N}^-$, $\mathfrak{M}^+$ and $\mathfrak{M}^-$ (in accordance with the realizations of $\mathcal{K}$).

\begin{propo}\label{Propo_AutomorphismGroup}
	The automorphism group of the combinatorics $\mathfrak{K}$ is trivial.
\end{propo}

\begin{proof}
	By construction, the line $L_{12}$ is the only line containing the point of multiplicity 5 and only double points. Then it is fixed by all automorphisms. Thus an automorphism of $\mathfrak{K}$ fixes the unique point of multiplicity 5. But in $\mathcal{K}$, the action of the automorphism group permutes cyclically the points of multiplicity 4. Since one of them was transformed into the unique point of multiplicity 5, then the automorphism group of $\mathfrak{K}$ is trivial.
\end{proof}

\begin{thm}\label{Theorem_ZariskiPair}
	There is no homeomorphism between $(\CC\PP^2,\mathfrak{N}^+)$ and $(\CC\PP^2,\mathfrak{M}^+)$.
\end{thm}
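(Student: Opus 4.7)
The plan is to exploit the triviality of the combinatorial automorphism group of $\mathfrak{K}$ (Proposition~\ref{Propo_AutomorphismGroup}) in order to reduce this unordered statement to the already proved ordered one, Corollary~\ref{Cor_OrderedPair}. This is exactly the line-addition strategy used in~\cite{ArtCarCogMar_Topology}: adding $L_{12}$ through $L_1 \cap L_3 \cap L_5 \cap L_7$ rigidifies the combinatorics just enough that any ambient homeomorphism must respect the labeling of the other eleven lines.

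Concretely, I would argue by contradiction. Suppose there is a homeomorphism $h\colon(\CC\PP^2,\mathfrak{N}^+)\to(\CC\PP^2,\mathfrak{M}^+)$. The first step is to show that $h$ sends individual lines to individual lines: the lines are precisely the irreducible components of the arrangement support, and they are detected topologically (for instance as the closures of the connected components of the smooth part of $|\mathfrak{N}^+|$, which are open surfaces whose closures are $2$-spheres smoothly embedded in $\CC\PP^2$). Consequently $h$ induces a bijection between the line sets which preserves the incidence relations, i.e.~an automorphism of the combinatorics $\mathfrak{K}$.

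The second step is purely formal: by Proposition~\ref{Propo_AutomorphismGroup}, the automorphism group of $\mathfrak{K}$ is trivial, so this induced permutation is the identity. Thus $h(L_i^{\mathfrak{N}^+})=L_i^{\mathfrak{M}^+}$ for every $i\in\{1,\dots,12\}$, and in particular $h(L_{12}^{\mathfrak{N}^+})=L_{12}^{\mathfrak{M}^+}$. Removing the twelfth line on both sides, $h$ restricts to a homeomorphism of pairs
\[
	(\CC\PP^2,\mathcal{N}^+) \longrightarrow (\CC\PP^2,\mathcal{M}^+)
\]
that preserves the order of the first eleven lines. This directly contradicts Corollary~\ref{Cor_OrderedPair}, and the theorem follows.

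The only genuinely nontrivial input in this argument is external to the present section: the oriented-and-ordered distinction provided by Theorem~\ref{Thm_OrientedOrderedPair}, from which Corollary~\ref{Cor_OrderedPair} is obtained by precomposing (or postcomposing) a hypothetical order-preserving homeomorphism with the orientation-reversing complex conjugation $\mathcal{M}^+\leftrightarrow\mathcal{M}^-$. Within the proof of Theorem~\ref{Theorem_ZariskiPair} itself, the main delicate point is the first step, where one must pass from a topological homeomorphism to a combinatorial isomorphism; once this topological-to-combinatorial translation is in place, the combinatorial rigidity supplied by the added line $L_{12}$ makes the rest of the argument automatic.
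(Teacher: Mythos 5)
Your proposal is correct and follows essentially the same route as the paper's proof: both reduce to Corollary~\ref{Cor_OrderedPair} via the triviality of $\operatorname{Aut}(\mathfrak{K})$ (Proposition~\ref{Propo_AutomorphismGroup}). You simply make explicit two steps the paper leaves implicit, namely that a homeomorphism of pairs permutes the lines and hence induces a combinatorial automorphism, and that after noting $h(L_{12})=L_{12}$ one removes $L_{12}$ to obtain an order-preserving homeomorphism of the eleven-line pairs $(\CC\PP^2,\mathcal{N}^+)\to(\CC\PP^2,\mathcal{M}^+)$.
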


\begin{proof}
	By Corollary~\ref{Cor_OrderedPair}, there is no order-preserving homeomorphism between $(\CC\PP^2,\mathfrak{N}^+)$ and $(\CC\PP^2,\mathfrak{M}^+)$. Assume that there is a homeomorphism between $(\CC\PP^2,\mathfrak{N}^+)$ and $(\CC\PP^2,\mathfrak{M}^+)$ that does not preserve the order. Then it induces a non-trivial automorphism of the combinatorics $\mathfrak{K}$, which is impossible according to Proposition~\ref{Propo_AutomorphismGroup}.
\end{proof}

\begin{cor}\label{Cor_ZariskiPairs}
	There is no homeomorphism between $(\CC\PP^2,\mathfrak{N}^+)$ (or $(\CC\PP^2,\mathfrak{N}^-)$) and $(\CC\PP^2,\mathfrak{M}^+)$ (or  $(\CC\PP^2,\mathfrak{M}^-)$).
\end{cor}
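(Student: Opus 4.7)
The plan is to reduce each of the three cases not directly addressed by Theorem~\ref{Theorem_ZariskiPair} to that theorem by pre- or post-composing with complex conjugation. I would let $c:\CC\PP^2\to\CC\PP^2$ denote the self-homeomorphism $[x:y:z]\mapsto[\bar x:\bar y:\bar z]$. Since the equations defining $\mathfrak{N}^-$ (resp.\ $\mathfrak{M}^-$) are obtained from those defining $\mathfrak{N}^+$ (resp.\ $\mathfrak{M}^+$) by replacing the parameter $\alpha$ by $\bar\alpha$, and since $c$ sends a projective line $\{ax+by+cz=0\}$ to the line $\{\bar a\,x+\bar b\,y+\bar c\,z=0\}$, the map $c$ restricts to homeomorphisms of pairs
\[
c:(\CC\PP^2,\mathfrak{N}^+)\longrightarrow(\CC\PP^2,\mathfrak{N}^-)\qquad\text{and}\qquad c:(\CC\PP^2,\mathfrak{M}^+)\longrightarrow(\CC\PP^2,\mathfrak{M}^-),
\]
and, as $c$ is an involution, also in the opposite directions.

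Next I would argue by contradiction. Assume there exists a homeomorphism $h:(\CC\PP^2,\mathfrak{N}^{\varepsilon_1})\to(\CC\PP^2,\mathfrak{M}^{\varepsilon_2})$ for some pair of signs $(\varepsilon_1,\varepsilon_2)\neq(+,+)$. A single composition with $c$ on the appropriate side then yields a homeomorphism between $(\CC\PP^2,\mathfrak{N}^+)$ and $(\CC\PP^2,\mathfrak{M}^+)$: take $c\circ h$ if $(\varepsilon_1,\varepsilon_2)=(+,-)$; take $h\circ c$ if $(\varepsilon_1,\varepsilon_2)=(-,+)$; and take $c\circ h\circ c$ if $(\varepsilon_1,\varepsilon_2)=(-,-)$. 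Each of these contradicts Theorem~\ref{Theorem_ZariskiPair}, which exhausts all cases and proves the corollary.

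There is essentially no obstacle here: the mathematical content lives entirely in Theorem~\ref{Theorem_ZariskiPair}, and the corollary is a purely formal consequence of the observation that complex conjugation exchanges each realization of $\mathfrak{K}$ with its Galois conjugate. The only point I would double-check is the already-noted remark after Theorem~\ref{Thm_OrientedOrderedPair} that $c$ does induce a genuine homeomorphism of pairs between $\mathfrak{N}^\pm$ and between $\mathfrak{M}^\pm$; once this is in place the three-line case analysis above is immediate.
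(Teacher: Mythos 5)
Your proof is correct in spirit but takes a genuinely different route from the paper's implicit argument, and it relies on an assumption the paper never makes. The paper states this corollary without a displayed proof, the intended reasoning being that the proof of Theorem~\ref{Theorem_ZariskiPair} applies verbatim to each of the four pairs: Corollary~\ref{Cor_OrderedPair} already rules out order-preserving homeomorphisms for \emph{all} combinations $\mathcal{N}^{\pm}$ versus $\mathcal{M}^{\pm}$ (not just the $(+,+)$ case), and an order-scrambling homeomorphism would again induce a non-trivial automorphism of $\mathfrak{K}$, contradicting Proposition~\ref{Propo_AutomorphismGroup}. Your approach instead reduces every case to the single $(+,+)$ instance of Theorem~\ref{Theorem_ZariskiPair} by pre/post-composing with complex conjugation $c$. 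This is clean and formally valid once you know $c$ is a homeomorphism of pairs $(\CC\PP^2,\mathfrak{N}^+)\to(\CC\PP^2,\mathfrak{N}^-)$ and $(\CC\PP^2,\mathfrak{M}^+)\to(\CC\PP^2,\mathfrak{M}^-)$; but note that the remark you cite concerns the $11$-line arrangements $\mathcal{N}^{\pm},\mathcal{M}^{\pm}$, whereas the paper never specifies the equation of $L_{12}$ and in fact explicitly allows the twelfth lines of the different realizations \emph{not} to be Galois- or complex-conjugate (this is the point of the remark about arithmetic versus non-arithmetic pairs after Corollary~\ref{Cor_ZariskiPairs}). So your claim that ``$\mathfrak{N}^-$ is obtained from $\mathfrak{N}^+$ by replacing $\alpha$ with $\bar\alpha$'' is an extra hypothesis. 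It can be repaired: the space of admissible twelfth lines through a fixed point is connected, so $c$ followed by an isotopy does give the desired homeomorphism of pairs. Still, the paper's route is more economical because it needs nothing about the twelfth line beyond the combinatorics $\mathfrak{K}$, and it exploits the already-proved four-fold Corollary~\ref{Cor_OrderedPair} rather than reconstructing it via conjugation.
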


We could remark that if the lines added to delete the automorphism of the combinatorics, are conjugated in~$\QQ(\zeta_{10})$ then the \textsc{Zariski} pairs obtained are arithmetic pairs. In particular, their fundamental group have the same profinite completion (\emph{i.e.} the same finite quotients). But if the lines are not conjugated in $\QQ(\zeta_{10})$ then the pairs obtained are not arithmetic. 

\begin{lem}\label{Lemma_OrientedPairs}
	There is no orientation-preserving homeomorphism between $(\CC\PP^2,\mathfrak{N}^+)$ and $(\CC\PP^2,\mathfrak{N}^-)$ (resp. $(\CC\PP^2,\mathfrak{M}^+)$ and $(\CC\PP^2,\mathfrak{M}^-)$).
\end{lem}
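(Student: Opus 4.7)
The plan is to reduce the statement to Theorem~\ref{Thm_OrientedOrderedPair} by exploiting Proposition~\ref{Propo_AutomorphismGroup}, in the same spirit as the proof of Theorem~\ref{Theorem_ZariskiPair}. Suppose, for contradiction, that $h \colon (\CC\PP^2,\mathfrak{N}^+) \to (\CC\PP^2,\mathfrak{N}^-)$ is an orientation-preserving homeomorphism of pairs. Since both arrangements realize the combinatorics $\mathfrak{K}$, the map $h$ sends the union of the twelve lines onto itself, permutes its one-dimensional irreducible components, and respects incidences at singular points (as multiplicities of singular points are topological invariants). It therefore induces an automorphism of $\mathfrak{K}$. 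By Proposition~\ref{Propo_AutomorphismGroup} this automorphism must be the identity, so $h$ sends the $i$-th line of $\mathfrak{N}^+$ to the $i$-th line of $\mathfrak{N}^-$ for every $i=1,\dots,12$.

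In particular $h$ respects the twelfth line $L_{12}$. Forgetting it yields an orientation-preserving self-homeomorphism of $\CC\PP^2$ that carries $\mathcal{N}^+$ onto $\mathcal{N}^-$ and preserves the labelling of the remaining eleven lines; equivalently, an orientation- and order-preserving homeomorphism of pairs $(\CC\PP^2,\mathcal{N}^+)\to(\CC\PP^2,\mathcal{N}^-)$. This directly contradicts Theorem~\ref{Thm_OrientedOrderedPair}. The argument for the pair $(\mathfrak{M}^+,\mathfrak{M}^-)$ is identical, word for word.

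No substantial obstacle is expected, since both key ingredients---the triviality of the automorphism group of $\mathfrak{K}$ and the distinction of the $11$-line arrangements up to orientation- and order-preserving homeomorphism---are already established before this lemma. The only point worth making explicit is the passage from a homeomorphism of pairs to a combinatorial automorphism, which is purely formal: the individual lines are the one-dimensional irreducible components of the curve, and incidence data at singular points is preserved by any homeomorphism.
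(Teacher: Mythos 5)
Your proof is correct and follows essentially the same route as the paper: combine Proposition~\ref{Propo_AutomorphismGroup} (trivial automorphism group of $\mathfrak{K}$, forcing any homeomorphism of pairs to be order-preserving) with Theorem~\ref{Thm_OrientedOrderedPair}. You are in fact somewhat more explicit than the paper's two-line proof, since you spell out the step of forgetting $L_{12}$ to pass from the 12-line arrangements $\mathfrak{N}^\pm$ back to the 11-line arrangements $\mathcal{N}^\pm$ to which Theorem~\ref{Thm_OrientedOrderedPair} literally applies, a step the paper leaves implicit.
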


\begin{proof}
	By Theorem~\ref{Thm_OrientedOrderedPair}, there is no homeomorphism preserving both orientation and order between the pairs $(\CC\PP^2,\mathfrak{N}^+)$ and $(\CC\PP^2,\mathfrak{N}^-)$. But, by construction, there is no non-trivial automorphism of the combinatorics $\mathfrak{K}$. Then there is no orientation-preserving homeomorphism from $(\CC\PP^2,\mathfrak{N}^+)$ to $(\CC\PP^2,\mathfrak{N}^-)$.
\end{proof}

\begin{cor}\label{Cor_Zariski4tuplet}
	There is no orientation-preserving homeomorphism between any two pairs among  $(\CC\PP^2,\mathfrak{N}^+)$, $(\CC\PP^2,\mathfrak{N}^-)$, $(\CC\PP^2,\mathfrak{M}^+)$ and $(\CC\PP^2,\mathfrak{M}^-)$.
\end{cor}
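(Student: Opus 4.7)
My plan is a short case analysis over the six unordered pairs that can be formed from the four arrangements $\mathfrak{N}^+$, $\mathfrak{N}^-$, $\mathfrak{M}^+$, $\mathfrak{M}^-$, splitting them according to whether the two members share the same ``letter''. The two ``complex conjugate'' pairs $(\mathfrak{N}^+,\mathfrak{N}^-)$ and $(\mathfrak{M}^+,\mathfrak{M}^-)$ are precisely those covered by Lemma~\ref{Lemma_OrientedPairs}, so these cases require no additional work.

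The remaining four pairs $(\mathfrak{N}^{\varepsilon},\mathfrak{M}^{\varepsilon'})$, with $\varepsilon,\varepsilon'\in\{+,-\}$, mix an $\mathfrak{N}$-realization with an $\mathfrak{M}$-realization, and each of them is handled by the strictly stronger Corollary~\ref{Cor_ZariskiPairs}, which rules out \emph{any} homeomorphism between the two members, and in particular any orientation-preserving one.

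Combining these two observations exhausts all six unordered pairs and yields the statement. Since every case reduces to a direct appeal to a result already established earlier in the section, I do not foresee any genuine obstacle; the only thing to verify is that the above dichotomy really covers every unordered pair formed from the four arrangements, which is immediate.
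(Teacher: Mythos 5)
Your proposal is correct and matches the paper's proof, which simply states that the corollary follows from Corollary~\ref{Cor_ZariskiPairs} and Lemma~\ref{Lemma_OrientedPairs}; you have merely made explicit the case split over the six unordered pairs that the paper leaves implicit.
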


\begin{proof}
	It is a consequence of Corollary~\ref{Cor_ZariskiPairs} and Lemma~\ref{Lemma_OrientedPairs}.
\end{proof}
\section{Oriented and ordered topological types}\label{Section_Distinction}

This section is the mathematical cornerstone of the paper. It contains (in Subsection~\ref{Subsection_Computation}) the distinction of the different ambient isotopy classes of the arrangements $\mathcal{N}^+$, $\mathcal{N}^-$, $\mathcal{M}^+$ and $\mathcal{M}^-$ previously constructed, and then the proof that they form an oriented and ordered \textsc{Zarsiki} 4-tuplet (i.e. Theorem~\ref{Thm_OrientedOrderedPair}).

\subsection{The invariant $\mathcal{I}(\A,\xi,\gamma)$}\label{Subsection_Invariant}\mbox{}

\subsubsection{An inner-cyclic arrangement}\mbox{}

For more details on the construction and for the computation of the invariant $\mathcal{I}(\A,\xi,\gamma)$ see~\cite{ArtFloGue}. We denote by $E_\A$ the complement of $\A=\set{L_1,\cdots,L_n}$ in $\CC\PP^2$. Let $\alpha_i\in \HH_1(E_\A)$ be the homological meridian associated with the line $L_i\in\A$. Remark that the set $\set{\alpha_2,\cdots,\alpha_n}$ generates $\HH_1(E_\A)$, indeed we have the relation $\alpha_1+\cdots+\alpha_n=0$. A \emph{character} on an arrangement $\A$ is a groups-morphism:
\begin{equation*}
	\xi:\HH_1(E_\A)\rightarrow \CC^*,
	\end{equation*}
	with $\prod\limits_{L_i\in\A} \xi(\alpha_i)=1$ to respect the previous relation.
	
	%

\begin{de}\label{Definition_InnerCyclic}
	Let $\A$ be an arrangement, $\xi$ be a character on $\A$ and $\gamma$ be a cycle of $\Gamma_\A$.  The triplet $(\A,\xi,\gamma)$ is an \emph{inner-cylic arrangement} if:
	\begin{enumerate}
		\item for all $v_{L_i}\in V_\L(\C)$, if $v_{L_i}\in \gamma$, then $\xi(\alpha_i)=1$,
		\item for all $v_P\in V_\P(\C)$, if $v_P\in \gamma$, then for all $L_i\ni P$, $\xi(\alpha_i)=1$,
		\item for all $v_L\in\gamma$, if $P\in L$ then $\prod\limits_{L_i\ni P}\xi(\alpha_i)=1$.
	\end{enumerate}
\end{de}

\begin{rmk}
	Suppose that $\A$ and $\A'$ are two realizations of the same combinatorics (\emph{i.e.} there is an isomorphism $\phi : \C_\A \xrightarrow{\sim} \C_{\A'}$ of ordered combinatorics). If $\xi$ is a character on $E_\A$, then it induces on $E_{\A'}$ a character $\xi'$ defined by $\xi'\circ \phi = \xi$. Furthermore, if $(\A,\xi,\gamma)$ is an inner-cyclic arrangement, then $(\A',\xi',\gamma')$ is an inner-cyclic arrangement too, where $\gamma'$ is the cycle of $\Gamma_{\A'}$ obtained from $\gamma$ by $\phi$. In other terms the existence of a character $\xi$ and a cycle $\gamma$ such that $(\A,\xi,\gamma)$ is an inner-cyclic arrangement, is determined by the combinatorics of $\A$.
\end{rmk}

By the previous remark, we can define a character on the combinatorics $\mathcal{K}$ and consider it on $\mathcal{N}^+$, $\mathcal{N}^-$, $\mathcal{M}^+$ and $\mathcal{M}^-$. Thus let $\xi$ be such a character defined as follows:
\begin{equation*}
		\xi:(L_1,\cdots,L_{11}) \longmapsto (\zeta,\zeta^4,\zeta^3,\zeta^2,1,1,\zeta,\zeta^2,\zeta^3,\zeta^4,1),
\end{equation*}
where $\zeta$ is a primitive 5-root of unity. Let $\gamma_{(5,6,11)}$ be the cycle of $\HH_1(\Gamma_\mathcal{K})$ defined by:
\begin{equation*}
	\begin{tikzpicture}[xscale=1.5]
		\node (A1) at (0,0) {$v_{L_5}$};
		\node (A2) at (1,0) {$v_{P_{5,6}}$};
		\node (A3) at (2,0) {$v_{L_6}$};
		\node (A4) at (3,0) {$v_{P_{6,11}}$};
		\node (A5) at (4,0) {$v_{L_{11}}$};
		\node (A6) at (5,0) {$v_{P_{11,5}}$};
		\draw[->] (A1)  -- (A2);
		\draw[->] (A2) -- (A3);
		\draw[->] (A3) -- (A4);
		\draw[->] (A4) -- (A5);
		\draw[->] (A5) -- (A6);
		\draw[->] (A6) to[out=110,in=0] (3.5,0.75) -- (1.5,0.75) to[out=180,in=80] (A1);
	\end{tikzpicture}
\end{equation*}

\begin{propo}\label{Propo_N_M_inner_cylic}
	The triplets $(\mathcal{N}^+,\xi,\gamma_{(5,6,11)})$, $(\mathcal{N}^-,\xi,\gamma_{(5,6,11)})$, $(\mathcal{M}^+,\xi,\gamma_{(5,6,11)})$ and $(\mathcal{M}^-,\xi,\gamma_{(5,6,11)})$ are inner-cyclic arrangements.
\end{propo}

The proof of this proposition is straightforward, since the three conditions of Definition~\ref{Definition_InnerCyclic} are combinatorial. 

\subsubsection{Construction of the invariant}\mbox{}

The \emph{boundary manifold} $B_\A$ is the common boundary of a regular neighbourhood of the arrangement $\A$: $\Tub(\A)=\big(\bigcup \B(P)\big) \bigcup \big(\bigcup\Tub(L) \big) $ (where $\B(P)$ are 4-balls centered in the singular points of $\A$) and the exterior $\CC\PP^2\setminus \overset{\circ}{\Tub}(\A)$.  Up to homotopy type, there is a natural projection $\Tub(\A) \overset{\rho}{\longrightarrow} \Gamma_\A$, which induces an isomorphism $\rho_*$ on the first homology groups. A \emph{holed neighbourhood} $\Tub(\gamma)$ associated with $\gamma$ is a sub-manifold of $\Tub(\A)$ of the form:
\begin{equation*}
	\Tub(\gamma)=\Tub(\A)\setminus\Big[ \big( \bigcup\limits_{v_L\notin\gamma}\Tub( L) \big) \cup \big( \bigcup\limits_{v_P\notin\gamma}\B(P) \big) \Big].
\end{equation*}
A \emph{nearby cycle} $\tilde{\gamma}\in\HH_1(B_\A)$ associated with a cycle $\gamma\in\HH_1(\Gamma_\A)$ is defined as a path of $\partial\big(\overline{\Tub(\gamma)}\big)$ isotopic to $\rho_*\inv(\gamma)$ in $\Tub(\A)$.

We denote by $i$ the inclusion map of the boundary manifold in the complement $i : B_\A \hookrightarrow E_\A$.  Let $\A$ be a realization of $\C$, and $\xi$ be a torsion character on $\A$. We consider the following map:
\begin{equation*}
	\chi_{(\A,\xi)}:
		 \HH_1(B_\A)  \overset{i_*}{\longrightarrow}  \HH_1(E_\A)  \overset{\xi}{\longrightarrow}  \CC^*.
\end{equation*}
If $(\A,\xi,\gamma)$ is inner-cyclic and $\tilde{\gamma}$ is a nearby cycle associated with $\gamma$, then the value of $\chi_{(\A,\xi)} (\widetilde{\gamma})$ is independent of the choice of the nearby cycle $\tilde{\gamma}$, see~\cite[Lemma 2.2]{ArtFloGue}. Thus we define:
\begin{equation*}
 \mathcal{I}(\A,\xi, \gamma) = \chi_{(\A,\xi)} (\widetilde{\gamma}),
\end{equation*}
where $\tilde{\gamma}\in\HH_1(B_\A)$ is any nearby cycle associated with $\gamma$.

\begin{thm}[\cite{ArtFloGue}]\label{Thm_TopologicalInvariant}
	Let $\A$ and $\A'$ be two ordered realizations of an ordered combinatorics $\C$. If $(\A,\xi,\gamma)$  and $(\A',\xi,\gamma)$ are two inner-cyclic arrangements with the same oriented and ordered topological type, then 
	\begin{equation*}
		\mathcal{I}(\A,\xi, \gamma) = \mathcal{I}(\A',\xi,\gamma).
	\end{equation*}
\end{thm}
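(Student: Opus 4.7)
The plan is to exploit naturality of every piece of the invariant under the hypothesized oriented and ordered homeomorphism $h \colon (\CC\PP^2,\A) \to (\CC\PP^2,\A')$. The invariant has three ingredients: a regular neighbourhood $\Tub(\A)$ with its boundary manifold $B_\A$, a character $\xi$ on $H_1(E_\A)$, and a nearby cycle $\widetilde{\gamma} \in H_1(B_\A)$ lifting $\gamma \in H_1(\Gamma_\A)$. I will produce, from $h$, matching data on the primed side and then check that $\chi_{(\A,\xi)}$ evaluated on the two nearby cycles coincides.

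First, I would upgrade $h$ to a homeomorphism of stratified neighbourhoods. Because $h$ is a homeomorphism of pairs and preserves the order of the lines, it can be isotoped (rel.\ $\A$) to a homeomorphism that sends a regular neighbourhood $\Tub(\A)$ homeomorphically to $\Tub(\A')$ taking the tubular piece $\Tub(L_i)$ to $\Tub(L_i')$ and the ball $\B(P)$ to $\B(\phi(P))$, where $\phi$ is the induced combinatorial isomorphism (which is the identity on labels by the ordered hypothesis). Restriction gives $h_B \colon B_\A \to B_{\A'}$ and $h_E \colon E_\A \to E_{\A'}$, with $h_E \circ i_\A = i_{\A'} \circ h_B$. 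Since $h$ is \emph{orientation-preserving} on $\CC\PP^2$ and sends $L_i$ to $L_i'$ preserving the complex-analytic co-orientation of each line (the key use of orientation in order not to swap a meridian with its inverse), $(h_E)_* \colon H_1(E_\A) \to H_1(E_{\A'})$ carries each meridian $\alpha_i$ to $\alpha_i'$. Consequently $\xi' \circ (h_E)_* = \xi$, which by composition with $i_*$ gives $\chi_{(\A',\xi)} \circ (h_B)_* = \chi_{(\A,\xi)}$.

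Next, I would check that nearby cycles go to nearby cycles. The holed neighbourhood $\Tub(\gamma)$ is built by removing exactly those pieces $\Tub(L)$ and $\B(P)$ indexed by vertices \emph{not} in $\gamma$; since $h$ respects both kinds of pieces and the labelling, $h(\Tub(\gamma)) = \Tub(\gamma')$, where $\gamma'$ is the image cycle in $\Gamma_{\A'}$, which equals $\gamma$ under the identification of combinatorics. A nearby cycle $\widetilde{\gamma}$ is a path in $\partial(\overline{\Tub(\gamma)})$ isotopic in $\Tub(\A)$ to $\rho_*^{-1}(\gamma)$. Applying $h_B$ and using the commutativity of the projections $\rho$ with $h$ (both collapse the stratified neighbourhood to its incidence graph in the same combinatorial way), $(h_B)_*(\widetilde{\gamma})$ is a nearby cycle for $\gamma'$ in $B_{\A'}$. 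By Lemma 2.2 of \cite{ArtFloGue} cited in the excerpt, the value of $\chi_{(\A',\xi)}$ on any nearby cycle is independent of the representative, so it equals $\mathcal{I}(\A',\xi,\gamma)$.

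Putting these pieces together,
\begin{equation*}
\mathcal{I}(\A,\xi,\gamma) \;=\; \chi_{(\A,\xi)}(\widetilde{\gamma}) \;=\; \chi_{(\A',\xi)}\bigl((h_B)_*(\widetilde{\gamma})\bigr) \;=\; \mathcal{I}(\A',\xi,\gamma).
\end{equation*}
The main obstacle, I expect, is the second step: justifying that an abstract homeomorphism of pairs may genuinely be promoted to one respecting the entire stratified tubular structure (balls around each singular point, tubes around each line, and the projection $\rho$ to $\Gamma_\A$). This is essentially a uniqueness-of-regular-neighbourhood argument for stratified spaces; it is where both the order-preservation (to match tubes $\Tub(L_i) \leftrightarrow \Tub(L_i')$ labelwise) and orientation-preservation (to avoid reversing meridians and hence conjugating the character values) enter in an essential way. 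Once this structural compatibility is in hand, the computation of $\mathcal{I}$ is forced to agree on the two sides.
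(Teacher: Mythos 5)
The paper does not prove this theorem; it is imported verbatim from \cite{ArtFloGue}, so there is no in-paper proof to compare against. Your outline captures the natural strategy any proof must follow: isotope the homeomorphism to one respecting the stratified tubular structure (uniqueness of regular neighbourhoods), check naturality of the boundary manifold, its inclusion into the complement, the meridians, and the nearby cycle, and then conclude via the well-definedness of the nearby-cycle choice from \cite[Lemma~2.2]{ArtFloGue}. That structure is correct.

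One step, however, needs to be stated more carefully. You attribute the fact that $(h_E)_*(\alpha_i)=\alpha_i'$ (rather than $-\alpha_i'$) to $h$ being orientation-preserving on $\CC\PP^2$. This alone does not suffice: complex conjugation on $\CC\PP^2$ is orientation-preserving as a map of real $4$-manifolds (on the chart $\CC^2$ its differential has determinant $(-1)^2=1$), preserves order, and yet it negates every meridian — which is precisely why the paper obtains $\mathcal{I}(\mathcal{N}^+,\xi,\gamma)=\zeta$ but $\mathcal{I}(\mathcal{N}^-,\xi,\gamma)=\zeta^4$ for a conjugate pair. The notion of ``oriented'' topological type in \cite{ArtFloGue}, and reflected in the paper's remark calling conjugation orientation-reversing, must include preservation of the complex co-orientations of the lines, equivalently fixed meridian orientations. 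You do write ``preserving the complex-analytic co-orientation of each line,'' which is the correct condition, but the parenthetical framing suggests you view it as a consequence of ambient orientation-preservation; it should instead be part of the hypothesis being unpacked. With that repaired, the computation $\chi_{(\A',\xi)}\circ (h_B)_*=\chi_{(\A,\xi)}$, the fact that $(h_B)_*(\widetilde{\gamma})$ is again a nearby cycle, and the final equality all go through as you describe.
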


\subsection{Computation of the invariant}\label{Subsection_Computation}\mbox{}

\subsubsection{Braided wiring diagrams}\mbox{}

The invariant $\mathcal{I}(\A,\xi,\gamma)$ can be computed from the \emph{braided wiring diagram} of the arrangement. It is a singular braid associated with the arrangement (for more details see~\cite{Arv,CohSuc_Monodromy}) and it is defined as follows: consider a line $L\in\A$ as the line at the infinity, and let $\A\aff\subset\CC^2\simeq\CC\PP^2\setminus\set{L}$ be the associated affine arrangement. Let $p:\CC^2\rightarrow\CC$ be a generic projection for the arrangement $\A\aff$ (i.e. no line of $\A\aff$ is a fiber of~$p$). If $\nu:[0,1]\rightarrow\CC$ is a smooth path containing the images of the singular points of $\A\aff$ by $p$ (or continuous and piecewise smooth if the images of the singularities are in the smooth part of $\nu$), then we define:

\begin{de}
	The \emph{braided wiring diagram} of $\A$ associated with $\nu$ and $p$ is defined by:
	\begin{equation*}
		W_\A=\set{(x,y)\in\A\aff\ \mid\ p(x,y)\in\nu([0,1])}.
	\end{equation*}
	The trace $\omega_i= W_\A \cap L_i$ is called the \emph{wire} associated to the line $L_i$.
\end{de}

	\begin{figure}[h!]
		\centering
			\subfloat[The path $\nu_{\mathcal{N}^+}$]{\label{nu_N}\includegraphics[width=7.5cm,height=6.5cm]{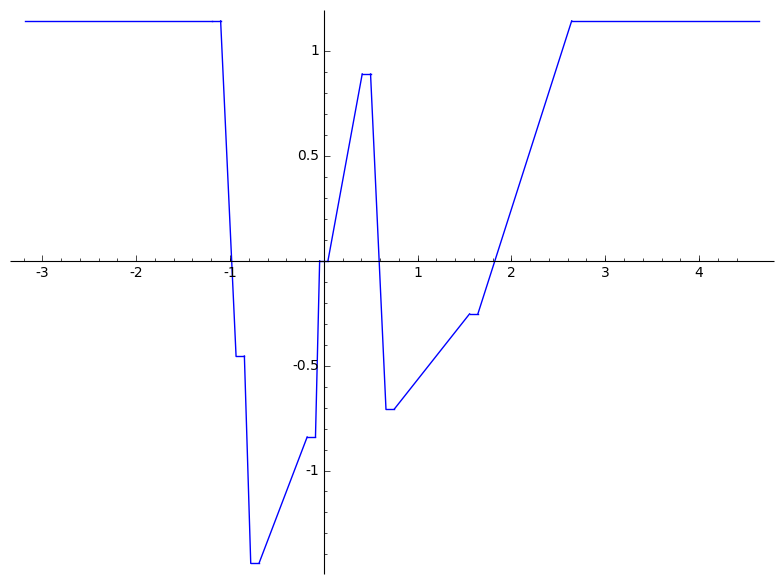}}
			\hspace{0.5cm}
			\subfloat[The path $\nu_{\mathcal{M}^+}$]{\label{nu_M}\includegraphics[width=7.5cm,height=6.5cm]{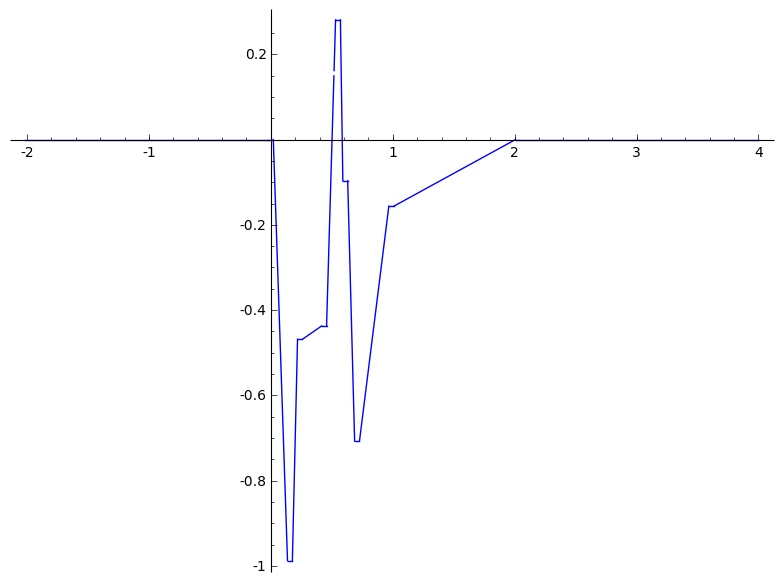}}			
		\caption{ Paths $\nu$ use for the computation of the wiring diagrams \label{nu_paths}}
	\end{figure}

From the equations of  $\mathcal{N}^+$ and $\mathcal{M}^+$, we compute their wiring diagrams. To use the result on the computation of $i_*:\p(B_\A)\rightarrow \p(E_\A)$ developed in~\cite{FloGueMar}, we choose a line supporting the cycle $\gamma$ as the line at infinity. With the change of variables $x\mapsto\lambda x$ and $z\mapsto\lambda x - z$, where $\lambda=e^{i\pi/4}$,  the line $L_5$ is consider as the line at infinity for the projection $p:[x:y:z]\rightarrow (x/z,y/z)$. Note that with this change of variables the lines $L_1$, $L_3$ and $L_7$ are vertical (\emph{i.e.} fibers of the projection), so the projection is not generic. Nevertheless, we can draw the wiring diagram of $\A\setminus\set{L_1,L_3,L_7}$, and adding these lines as vertical one, see~Figures~\ref{11_prime_A},~\ref{11_prime_B}, and obtain non-generic braided wiring diagram. The paths $\nu_{\mathcal{N}^+}$ and $\nu_{\mathcal{M}^+}$ considered to obtain these diagrams are pictured in  Figure~\ref{nu_paths}. All these computations are done using \texttt{Sage}~\cite{Sage}. The source can be downloaded from the author's website:\nopagebreak
\begin{center}\nopagebreak\texttt{http://www.benoit-guervilleballe.com/publications/ZP\_wiring\_diagrams.zip}\end{center}

\begin{rmk}
	For smaller and clearer pictures, some braids are simplified (using Reidemeister moves) in the non-generic wiring diagrams of $\mathcal{N}^+$ and $\mathcal{M}^+$ pictured in Figure~\ref{11_prime_A} and Figure~\ref{11_prime_B} .
\end{rmk}

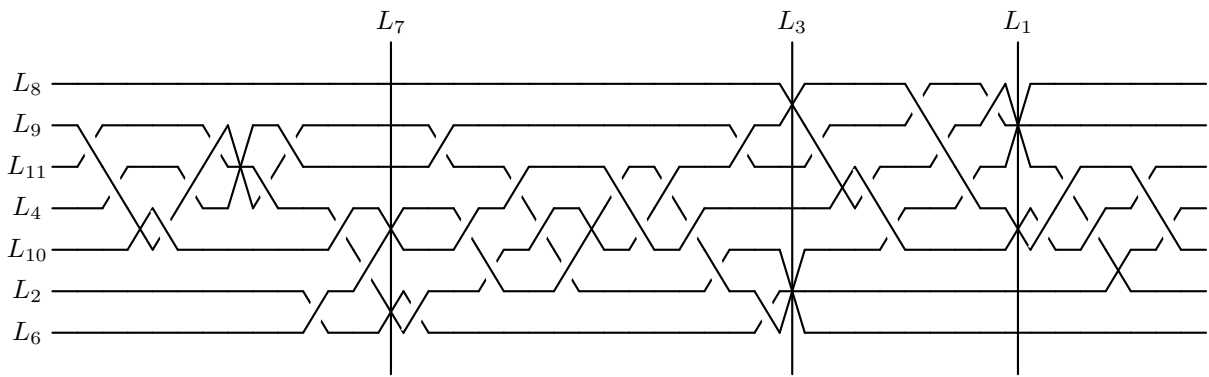
\begin{figure}[h!]
	\centering
	\begin{tikzpicture}
	\begin{scope}[xscale=0.33,yscale=0.55]
		\ncross{0}{0};\ncross{1}{0};\ncross{2}{0};\ncross{3}{0};\ncross{4}{0};\ncross{5}{0};\ncross{6}{0};
		\ncross{0}{1};\ncross{1}{1};\ncross{2}{1};\ncross{3}{1};\ocross{4}{1};\ncross{6}{1};
		\ncross{0}{2};\ncross{1}{2};\ncross{2}{2};\ocross{3}{2};\ncross{5}{2};\ncross{6}{2};
		\ncross{0}{3};\ncross{1}{3};\rcross{2}{3};\ncross{4}{3};\ncross{5}{3};\ncross{6}{3};
		\ncross{0}{4};\ncross{1}{4};\ocross{2}{4};\ncross{4}{4};\ncross{5}{4};\ncross{6}{4};
		\ncross{0}{5};\ncross{1}{5};\ncross{2}{5};\ucross{3}{5};\ncross{5}{5};\ncross{6}{5};
		\ncross{0}{6};\ncross{1}{6};\ncross{2}{6};\ncross{3}{6};\ucross{4}{6};\ncross{6}{6};
		\ncross{0}{7};\ncross{1}{7};\ncross{2}{7};\mcross{3}{3}{7};\ncross{6}{7};
		\ncross{0}{8};\ncross{1}{8};\ncross{2}{8};\ocross{3}{8};\ncross{5}{8};\ncross{6}{8};
		\ncross{0}{9};\ncross{1}{9};\ncross{2}{9};\ncross{3}{9};\ocross{4}{9};\ncross{6}{9};
		\ucross{0}{10};\ncross{2}{10};\ncross{3}{10};\ncross{4}{10};\ncross{5}{10};\ncross{6}{10};
		\ncross{0}{11};\ncross{1}{11};\ucross{2}{11};\ncross{4}{11};\ncross{5}{11};\ncross{6}{11};
		\ncross{0}{12};\ucross{1}{12};\ncross{3}{12};\ncross{4}{12};\ncross{5}{12};\ncross{6}{12};
		\rcross{0}{13};\rcross{2}{13};\ncross{4}{13};\ncross{5}{13};\ncross{6}{13};
		\draw[thick,cap=round] (13.5,-1) -- (13.5,7);
		\ucross{0}{14};\ncross{2}{14};\ncross{3}{14};\ncross{4}{14};\ncross{5}{14};\ncross{6}{14};
		\ncross{0}{15};\ncross{1}{15};\ncross{2}{15};\ncross{3}{15};\ucross{4}{15};\ncross{6}{15};
		\ncross{0}{16};\ncross{1}{16};\ucross{2}{16};\ncross{4}{16};\ncross{5}{16};\ncross{6}{16};
		\ncross{0}{17};\ocross{1}{17};\ncross{3}{17};\ncross{4}{17};\ncross{5}{17};\ncross{6}{17};
		\ncross{0}{18};\ncross{1}{18};\ncross{2}{18};\ucross{3}{18};\ncross{5}{18};\ncross{6}{18};
		\ncross{0}{19};\ncross{1}{19};\ucross{2}{19};\ncross{4}{19};\ncross{5}{19};\ncross{6}{19};
		\ncross{0}{20};\ucross{1}{20};\ncross{3}{20};\ncross{4}{20};\ncross{5}{20};\ncross{6}{20};
		\ncross{0}{21};\ncross{1}{21};\rcross{2}{21};\ncross{4}{21};\ncross{5}{21};\ncross{6}{21};
		\ncross{0}{22};\ncross{1}{22};\ncross{2}{22};\ocross{3}{22};\ncross{5}{22};\ncross{6}{22};
		\ncross{0}{23};\ncross{1}{23};\ocross{2}{23};\ncross{4}{23};\ncross{5}{23};\ncross{6}{23};
		\ncross{0}{24};\ncross{1}{24};\ncross{2}{24};\ucross{3}{24};\ncross{5}{24};\ncross{6}{24};
		\ncross{0}{25};\ncross{1}{25};\ucross{2}{25};\ncross{4}{25};\ncross{5}{25};\ncross{6}{25};
		\ncross{0}{26};\ocross{1}{26};\ncross{3}{26};\ncross{4}{26};\ncross{5}{26};\ncross{6}{26};
		\ncross{0}{27};\ncross{1}{27};\ncross{2}{27};\ncross{3}{27};\ucross{4}{27};\ncross{6}{27};
		\ocross{0}{28};\ncross{2}{28};\ncross{3}{28};\ncross{4}{28};\ncross{5}{28};\ncross{6}{28};
		\mcross{0}{3}{29};\ncross{3}{29};\ncross{4}{29};\rcross{5}{29};
		\draw[thick,cap=round] (29.5,-1) -- (29.5,7);
		\ncross{0}{30};\ncross{1}{30};\ncross{2}{30};\ncross{3}{30};\ocross{4}{30};\ncross{6}{30};
		\ncross{0}{31};\ncross{1}{31};\ncross{2}{31};\rcross{3}{31};\ncross{5}{31};\ncross{6}{31};
		\ncross{0}{32};\ncross{1}{32};\ncross{2}{32};\ocross{3}{32};\ncross{5}{32};\ncross{6}{32};
		\ncross{0}{33};\ncross{1}{33};\ocross{2}{33};\ncross{4}{33};\ncross{5}{33};\ncross{6}{33};
		\ncross{0}{34};\ncross{1}{34};\ncross{2}{34};\ncross{3}{34};\ncross{4}{34};\ocross{5}{34};
		\ncross{0}{35};\ncross{1}{35};\ncross{2}{35};\ncross{3}{35};\ocross{4}{35};\ncross{6}{35};
		\ncross{0}{36};\ncross{1}{36};\ncross{2}{36};\ocross{3}{36};\ncross{5}{36};\ncross{6}{36};
		\ncross{0}{37};\ncross{1}{37};\ncross{2}{37};\ncross{3}{37};\ncross{4}{37};\ucross{5}{37};
		\ncross{0}{38};\ncross{1}{38};\rcross{2}{38};\mcross{4}{3}{38};
		\draw[thick,cap=round] (38.5,-1) -- (38.5,7);		
		\ncross{0}{39};\ncross{1}{39};\ucross{2}{39};\ncross{4}{39};\ncross{5}{39};\ncross{6}{39};
		\ncross{0}{40};\ncross{1}{40};\ncross{2}{40};\ucross{3}{40};\ncross{5}{40};\ncross{6}{40};
		\ncross{0}{41};\ncross{1}{41};\ucross{2}{41};\ncross{4}{41};\ncross{5}{41};\ncross{6}{41};
		\ncross{0}{42};\rcross{1}{42};\ncross{3}{42};\ncross{4}{42};\ncross{5}{42};\ncross{6}{42};
		\ncross{0}{43};\ncross{1}{43};\ncross{2}{43};\ocross{3}{43};\ncross{5}{43};\ncross{6}{43};
		\ncross{0}{44};\ncross{1}{44};\ocross{2}{44};\ncross{4}{44};\ncross{5}{44};\ncross{6}{44};
		\ncross{0}{45};\ncross{1}{45};\ncross{2}{45};\ncross{3}{45};\ncross{4}{45};\ncross{5}{45};\ncross{6}{45};

		\node at (-1,6) {$L_{8}$};
		\node at (-1,5) {$L_{9}$};
		\node at (-1,4) {$L_{11}$};
		\node at (-1,3) {$L_{4}$};
		\node at (-1,2) {$L_{10}$};
		\node at (-1,1) {$L_{2}$};
		\node at (-1,0) {$L_{6}$};
		
		\node at (13.5,7.5) {$L_{7}$};
		\node at (29.5,7.5) {$L_{3}$};
		\node at (38.5,7.5) {$L_{1}$};
	\end{scope}
\end{tikzpicture}
	\caption{Non-generic braided wiring diagram of $\mathcal{N}^+$ \label{11_prime_A}}
\end{figure}

\begin{figure}[h!]
	\centering
	\begin{tikzpicture}
	\begin{scope}[xscale=0.32,yscale=0.55]
  	
		\ncross{0}{0};\ncross{1}{0};\ncross{2}{0};\ncross{3}{0};\ncross{4}{0};\ncross{5}{0};\ncross{6}{0};
		\ncross{0}{1};\ncross{1}{1};\ncross{2}{1};\ncross{3}{1};\ncross{4}{1};\ocross{5}{1};
		\ncross{0}{2};\ncross{1}{2};\ucross{2}{2};\ncross{4}{2};\ncross{5}{2};\ncross{6}{2};
		\ncross{0}{3};\ncross{1}{3};\ncross{2}{3};\ncross{3}{3};\ocross{4}{3};\ncross{6}{3};
		\mcross{0}{3}{4};\rcross{3}{4};\ncross{5}{4};7\ncross{6}{4};
		\draw[thick,cap=round] (4.5,-1) -- (4.5,7);
		\ncross{0}{5};\ocross{1}{5};\ncross{3}{5};\ncross{4}{5};\ncross{5}{5};\ncross{6}{5};
		\ncross{0}{6};\ncross{1}{6};\ncross{2}{6};\ncross{3}{6};\ucross{4}{6};\ncross{6}{6};
		\ncross{0}{7};\ncross{1}{7};\ncross{2}{7};\ucross{3}{7};\ncross{5}{7};\ncross{6}{7};
		\ncross{0}{8};\ncross{1}{8};\ucross{2}{8};\ncross{4}{8};\ncross{5}{8};\ncross{6}{8};
		\ncross{0}{9};\ncross{1}{9};\ncross{2}{9};\ncross{3}{9};\ncross{4}{9};\ucross{5}{9};
		\ncross{0}{10};\ncross{1}{10};\ncross{2}{10};\ncross{3}{10};\ucross{4}{10};\ncross{6}{10};
		\ncross{0}{11};\ncross{1}{11};\ncross{2}{11};\mcross{3}{3}{11};\ncross{6}{11};
		\ncross{0}{12};\ncross{1}{12};\ncross{2}{12};\ncross{3}{12};\ucross{4}{12};\ncross{6}{12};
		\ncross{0}{13};\ncross{1}{13};\ncross{2}{13};\ncross{3}{13};\ncross{4}{13};\ocross{5}{13};
		\ncross{0}{14};\ncross{1}{14};\ncross{2}{14};\ucross{3}{14};\ncross{5}{14};\ncross{6}{14};
		\ncross{0}{15};\ncross{1}{15};\ocross{2}{15};\ncross{4}{15};\ncross{5}{15};\ncross{6}{15};
		\ncross{0}{16};\ncross{1}{16};\ncross{2}{16};\ocross{3}{16};\ncross{5}{16};\ncross{6}{16};
		\ncross{0}{17};\ncross{1}{17};\ocross{2}{17};\ncross{4}{17};\ncross{5}{17};\ncross{6}{17};
		\ncross{0}{18};\ncross{1}{18};\ncross{2}{18};\ucross{3}{18};\ncross{5}{18};\ncross{6}{18};
		\ncross{0}{19};\rcross{1}{19};\ncross{3}{19};\ncross{4}{19};\ncross{5}{19};\ncross{6}{19};
		\ncross{0}{20};\ncross{1}{20};\ucross{2}{20};\ncross{4}{20};\ncross{5}{20};\ncross{6}{20};
		\ncross{0}{21};\ncross{1}{21};\ncross{2}{21};\ucross{3}{21};\ncross{5}{21};\ncross{6}{21};
		\ncross{0}{22};\ncross{1}{22};\ocross{2}{22};\ncross{4}{22};\ncross{5}{22};\ncross{6}{22};
		\ncross{0}{23};\ncross{1}{23};\rcross{2}{23};\ncross{4}{23};\ncross{5}{23};\ncross{6}{23};
		\ncross{0}{24};\ncross{1}{24};\ucross{2}{24};\ncross{4}{24};\ncross{5}{24};\ncross{6}{24};
		\ncross{0}{25};\ncross{1}{25};\ncross{2}{25};\ncross{3}{25};\ocross{4}{25};\ncross{6}{25};
		\ncross{0}{26};\ncross{1}{26};\ncross{2}{26};\ocross{3}{26};\ncross{5}{26};\ncross{6}{26};
		\ncross{0}{27};\ncross{1}{27};\ncross{2}{27};\ncross{3}{27};\ocross{4}{27};\ncross{6}{27};
		\ocross{0}{28};\ncross{2}{28};\ncross{3}{28};\ncross{4}{28};\ncross{5}{28};\ncross{6}{28};
		\ncross{0}{29};\ucross{1}{29};\ncross{3}{29};\ncross{4}{29};\ncross{5}{29};\ncross{6}{29};
		\ncross{0}{30};\ncross{1}{30};\ocross{2}{30};\ncross{4}{30};\ncross{5}{30};\ncross{6}{30};
		\ncross{0}{31};\ncross{1}{31};\rcross{2}{31};\ncross{4}{31};\ncross{5}{31};\ncross{6}{31};
		\ncross{0}{32};\ocross{1}{32};\ncross{3}{32};\ncross{4}{32};\ncross{5}{32};\ncross{6}{32};
		\ucross{0}{33};\ncross{2}{33};\ncross{3}{33};\ncross{4}{33};\ncross{5}{33};\ncross{6}{33};		
		\rcross{0}{34};\ncross{2}{34};\ncross{3}{34};\rcross{4}{34};\ncross{6}{34};
		\draw[thick,cap=round] (34.5,-1) -- (34.5,7);
		\ocross{0}{35};\ncross{2}{35};\ncross{3}{35};\ncross{4}{35};\ncross{5}{35};\ncross{6}{35};
		\ncross{0}{36};\ncross{1}{36};\ncross{2}{36};\ucross{3}{36};\ncross{5}{36};\ncross{6}{36};
		\ncross{0}{37};\ncross{1}{37};\ocross{2}{37};\ncross{4}{37};\ncross{5}{37};\ncross{6}{37};
		\ncross{0}{38};\ncross{1}{38};\ncross{2}{38};\ncross{3}{38};\ucross{4}{38};\ncross{6}{38};
		\ncross{0}{39};\ucross{1}{39};\ncross{3}{39};\ncross{4}{39};\ncross{5}{39};\ncross{6}{39};
		\ncross{0}{40};\rcross{1}{40};\ncross{3}{40};\mcross{4}{3}{40};
		\draw[thick,cap=round] (40.5,-1) -- (40.5,7);
		\ncross{0}{41};\ncross{1}{41};\ncross{2}{41};\ncross{3}{41};\ucross{4}{41};\ncross{6}{41};
		\ncross{0}{42};\ncross{1}{42};\ucross{2}{42};\ncross{4}{42};\ncross{5}{42};\ncross{6}{42};
		\ucross{0}{43};\ncross{2}{43};\ncross{3}{43};\ncross{4}{43};\ncross{5}{43};\ncross{6}{43};
		\ncross{0}{44};\ncross{1}{44};\ncross{2}{44};\ocross{3}{44};\ncross{5}{44};\ncross{6}{44};
		\ncross{0}{45};\ncross{1}{45};\ncross{2}{45};\rcross{3}{45};\ncross{5}{45};\ncross{6}{45};
		\ncross{0}{46};\ncross{1}{46};\ocross{2}{46};\ncross{4}{46};\ncross{5}{46};\ncross{6}{46};
		\ncross{0}{47};\ncross{1}{47};\ncross{2}{47};\ncross{3}{47};\ocross{4}{47};\ncross{6}{47};
		\ncross{0}{48};\ncross{1}{48};\ncross{2}{48};\ocross{3}{48};\ncross{5}{48};\ncross{6}{48};
		\ncross{0}{49};\ncross{1}{49};\ncross{2}{49};\ncross{3}{49};\ncross{4}{49};\ncross{5}{49};\ncross{6}{49};
		
		\node at (-1,6) {$L_{11}$};
		\node at (-1,5) {$L_{4}$};
		\node at (-1,4) {$L_{10}$};
		\node at (-1,3) {$L_{2}$};
		\node at (-1,2) {$L_{8}$};
		\node at (-1,1) {$L_{9}$};
		\node at (-1,0) {$L_{6}$};
		
		\node at (4.5,7.5) {$L_{3}$};
		\node at (34.5,7.5) {$L_{7}$};
		\node at (40.5,7.5) {$L_{1}$};

	\end{scope}
\end{tikzpicture}
	\caption{Non-generic braided wiring diagram of $\mathcal{M}^+$ \label{11_prime_B}}
\end{figure}
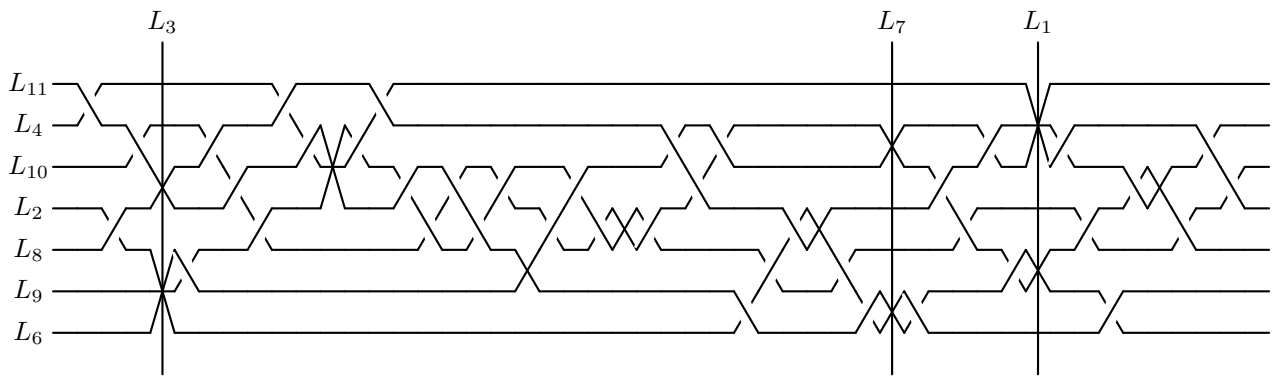

To obtain the wiring diagrams, we slightly modify the center of the projection $p$, such that it is always on $L_5$ and distinct (but very close) to the intersection of the lines $L_1$, $L_3$, $L_5$ and $L_7$. For example, a generic braided wiring diagram of $\mathcal{N}^+$ is pictured in Figure~\ref{Generic_WD_N+}, the perturbation applied to obtain this wiring diagram is such that the three vertical lines have a very negative slope and are still parallel (since their intersection point is still on the line at infinity). 

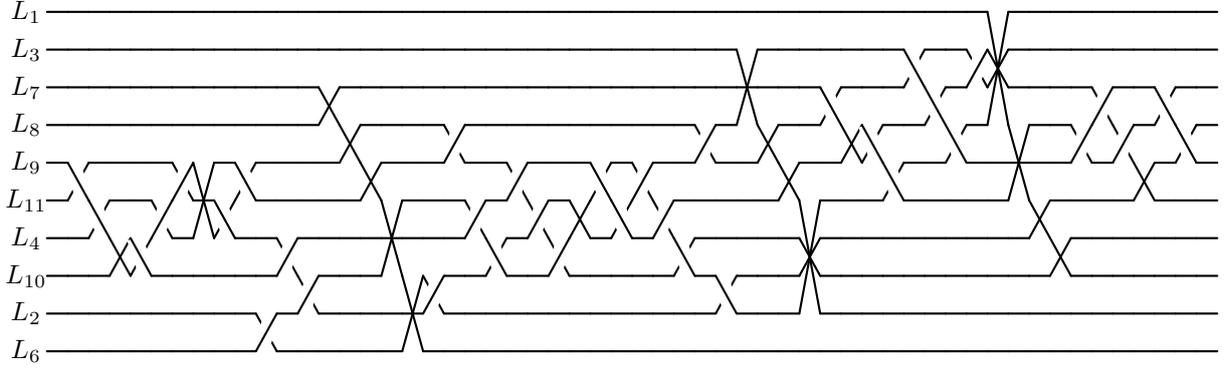
\begin{figure}[h!]
	\centering
	\begin{tikzpicture}
	\begin{scope}[xscale=0.275,yscale=0.5]
		\begin{scope}
			\ncross{0}{0};\ncross{1}{0};\ncross{2}{0};\ncross{3}{0};\ncross{4}{0};\ncross{5}{0};\ncross{6}{0};\ncross{7}{0};\ncross{8}{0};\ncross{9}{0};
			\ncross{0}{1};\ncross{1}{1};\ncross{2}{1};\ncross{3}{1};\ocross{4}{1};\ncross{6}{1};\ncross{7}{1};\ncross{8}{1};\ncross{9}{1};
			\ncross{0}{2};\ncross{1}{2};\ncross{2}{2};\ocross{3}{2};\ncross{5}{2};\ncross{6}{2};\ncross{7}{2};\ncross{8}{2};\ncross{9}{2};
			\ncross{0}{3};\ncross{1}{3};\rcross{2}{3};\ncross{4}{3};\ncross{5}{3};\ncross{6}{3};\ncross{7}{3};\ncross{8}{3};\ncross{9}{3};
			\ncross{0}{4};\ncross{1}{4};\ocross{2}{4};\ncross{4}{4};\ncross{5}{4};\ncross{6}{4};\ncross{7}{4};\ncross{8}{4};\ncross{9}{4};
			\ncross{0}{5};\ncross{1}{5};\ncross{2}{5};\ucross{3}{5};\ncross{5}{5};\ncross{6}{5};\ncross{7}{5};\ncross{8}{5};\ncross{9}{5};
			\ncross{0}{6};\ncross{1}{6};\ncross{2}{6};\ncross{3}{6};\ucross{4}{6};\ncross{6}{6};\ncross{7}{6};\ncross{8}{6};\ncross{9}{6};
			\ncross{0}{7};\ncross{1}{7};\ncross{2}{7};\mcross{3}{3}{7};\ncross{6}{7};\ncross{7}{7};\ncross{8}{7};\ncross{9}{7};
			\ncross{0}{8};\ncross{1}{8};\ncross{2}{8};\ocross{3}{8};\ncross{5}{8};\ncross{6}{8};\ncross{7}{8};\ncross{8}{8};\ncross{9}{8};
			\ncross{0}{9};\ncross{1}{9};\ncross{2}{9};\ncross{3}{9};\ocross{4}{9};\ncross{6}{9};\ncross{7}{9};\ncross{8}{9};\ncross{9}{9};
			\ucross{0}{10};\ncross{2}{10};\ncross{3}{10};\ncross{4}{10};\ncross{5}{10};\ncross{6}{10};\ncross{7}{10};\ncross{8}{10};\ncross{9}{10};
			\ncross{0}{11};\ncross{1}{11};\ucross{2}{11};\ncross{4}{11};\ncross{5}{11};\ncross{6}{11};\ncross{7}{11};\ncross{8}{11};\ncross{9}{11};
			\ncross{0}{12};\ucross{1}{12};\ncross{3}{12};\ncross{4}{12};\ncross{5}{12};\ncross{6}{12};\ncross{7}{12};\ncross{8}{12};\ncross{9}{12};

			\ncross{0}{13};\ncross{1}{13};\ncross{2}{13};\ncross{3}{13};\ncross{4}{13};\ncross{5}{13};\rcross{6}{13};\ncross{8}{13};\ncross{9}{13};
			\ncross{0}{14};\ncross{1}{14};\ncross{2}{14};\ncross{3}{14};\ncross{4}{14};\rcross{5}{14};\ncross{7}{14};\ncross{8}{14};\ncross{9}{14};
			\ncross{0}{15};\ncross{1}{15};\ncross{2}{15};\ncross{3}{15};\rcross{4}{15};\ncross{6}{15};\ncross{7}{15};\ncross{8}{15};\ncross{9}{15};
			\ncross{0}{16};\ncross{1}{16};\mcross{2}{3}{16};\ncross{5}{16};\ncross{6}{16};\ncross{7}{16};\ncross{8}{16};\ncross{9}{16};
			\mcross{0}{3}{17};\ncross{3}{17};\ncross{4}{17};\ncross{5}{17};\ncross{6}{17};\ncross{7}{17};\ncross{8}{17};\ncross{9}{17};
		\end{scope}
		
		\begin{scope}[shift={(4,1)}]
			\ucross{0}{14};\ncross{2}{14};\ncross{3}{14};\ncross{4}{14};\ncross{5}{14};\ncross{6}{14};\ncross{7}{14};\ncross{8}{14};\ncross{-1}{14};
			\ncross{0}{15};\ncross{1}{15};\ncross{2}{15};\ncross{3}{15};\ucross{4}{15};\ncross{6}{15};\ncross{7}{15};\ncross{8}{15};\ncross{-1}{15};
			\ncross{0}{16};\ncross{1}{16};\ucross{2}{16};\ncross{4}{16};\ncross{5}{16};\ncross{6}{16};\ncross{7}{16};\ncross{8}{16};\ncross{-1}{16};
			\ncross{0}{17};\ocross{1}{17};\ncross{3}{17};\ncross{4}{17};\ncross{5}{17};\ncross{6}{17};\ncross{7}{17};\ncross{8}{17};\ncross{-1}{17};
			\ncross{0}{18};\ncross{1}{18};\ncross{2}{18};\ucross{3}{18};\ncross{5}{18};\ncross{6}{18};\ncross{7}{18};\ncross{8}{18};\ncross{-1}{18};
			\ncross{0}{19};\ncross{1}{19};\ucross{2}{19};\ncross{4}{19};\ncross{5}{19};\ncross{6}{19};\ncross{7}{19};\ncross{8}{19};\ncross{-1}{19};
			\ncross{0}{20};\ucross{1}{20};\ncross{3}{20};\ncross{4}{20};\ncross{5}{20};\ncross{6}{20};\ncross{7}{20};\ncross{8}{20};\ncross{-1}{20};
			\ncross{0}{21};\ncross{1}{21};\rcross{2}{21};\ncross{4}{21};\ncross{5}{21};\ncross{6}{21};\ncross{7}{21};\ncross{8}{21};\ncross{-1}{21};
			\ncross{0}{22};\ncross{1}{22};\ncross{2}{22};\ocross{3}{22};\ncross{5}{22};\ncross{6}{22};\ncross{7}{22};\ncross{8}{22};\ncross{-1}{22};
			\ncross{0}{23};\ncross{1}{23};\ocross{2}{23};\ncross{4}{23};\ncross{5}{23};\ncross{6}{23};\ncross{7}{23};\ncross{8}{23};\ncross{-1}{23};
			\ncross{0}{24};\ncross{1}{24};\ncross{2}{24};\ucross{3}{24};\ncross{5}{24};\ncross{6}{24};\ncross{7}{24};\ncross{8}{24};\ncross{-1}{24};
			\ncross{0}{25};\ncross{1}{25};\ucross{2}{25};\ncross{4}{25};\ncross{5}{25};\ncross{6}{25};\ncross{7}{25};\ncross{8}{25};\ncross{-1}{25};
			\ncross{0}{26};\ocross{1}{26};\ncross{3}{26};\ncross{4}{26};\ncross{5}{26};\ncross{6}{26};\ncross{7}{26};\ncross{8}{26};\ncross{-1}{26};
			\ncross{0}{27};\ncross{1}{27};\ncross{2}{27};\ncross{3}{27};\ucross{4}{27};\ncross{6}{27};\ncross{7}{27};\ncross{8}{27};\ncross{-1}{27};
			\ocross{0}{28};\ncross{2}{28};\ncross{3}{28};\ncross{4}{28};\ncross{5}{28};\ncross{6}{28};\ncross{7}{28};\ncross{8}{28};\ncross{-1}{28};
			
			\ncross{0}{29};\ncross{1}{29};\ncross{2}{29};\ncross{3}{29};\ncross{4}{29};\mcross{5}{3}{29};\ncross{8}{29};\ncross{-1}{29};
			\ncross{0}{30};\ncross{1}{30};\ncross{2}{30};\ncross{3}{30};\rcross{4}{30};\ncross{6}{30};\ncross{7}{30};\ncross{8}{30};\ncross{-1}{30};
			\ncross{0}{31};\ncross{1}{31};\ncross{2}{31};\rcross{3}{31};\ncross{5}{31};\ncross{6}{31};\ncross{7}{31};\ncross{8}{31};\ncross{-1}{31};
			\mcross{0}{4}{32};\ncross{4}{32};\ncross{5}{32};\ncross{6}{32};\ncross{7}{32};\ncross{8}{32};\ncross{-1}{32};
		\end{scope}
		
		\begin{scope}[shift={(7,2)}]
			\ncross{0}{30};\ncross{1}{30};\ncross{2}{30};\ncross{3}{30};\ocross{4}{30};\ncross{6}{30};\ncross{7}{30};\ncross{-1}{30};\ncross{-2}{30};
			\ncross{0}{31};\ncross{1}{31};\ncross{2}{31};\rcross{3}{31};\ncross{5}{31};\ncross{6}{31};\ncross{7}{31};\ncross{-1}{31};\ncross{-2}{31};
			\ncross{0}{32};\ncross{1}{32};\ncross{2}{32};\ocross{3}{32};\ncross{5}{32};\ncross{6}{32};\ncross{7}{32};\ncross{-1}{32};\ncross{-2}{32};
			\ncross{0}{33};\ncross{1}{33};\ocross{2}{33};\ncross{4}{33};\ncross{5}{33};\ncross{6}{33};\ncross{7}{33};\ncross{-1}{33};\ncross{-2}{33};
			\ncross{0}{34};\ncross{1}{34};\ncross{2}{34};\ncross{3}{34};\ncross{4}{34};\ocross{5}{34};\ncross{7}{34};\ncross{-1}{34};\ncross{-2}{34};
			\ncross{0}{35};\ncross{1}{35};\ncross{2}{35};\ncross{3}{35};\ocross{4}{35};\ncross{6}{35};\ncross{7}{35};\ncross{-1}{35};\ncross{-2}{35};
			\ncross{0}{36};\ncross{1}{36};\ncross{2}{36};\ocross{3}{36};\ncross{5}{36};\ncross{6}{36};\ncross{7}{36};\ncross{-1}{36};\ncross{-2}{36};
			\ncross{0}{37};\ncross{1}{37};\ncross{2}{37};\ncross{3}{37};\ncross{4}{37};\ucross{5}{37};\ncross{7}{37};\ncross{-1}{37};\ncross{-2}{37};
			
			\begin{scope}[shift={(1,0)}]
				\ncross{0}{37};\ncross{1}{37};\ncross{2}{37};\ncross{3}{37};\mcross{4}{4}{37};\ncross{-1}{37};\ncross{-2}{37};
				\ncross{0}{38};\ncross{1}{38};\mcross{2}{3}{38};\ncross{5}{38};\ncross{6}{38};\ncross{7}{38};\ncross{-1}{38};\ncross{-2}{38};
				\ncross{0}{39};\rcross{1}{39};\ncross{3}{39};\ncross{4}{39};\ncross{5}{39};\ncross{6}{39};\ncross{7}{39};\ncross{-1}{39};\ncross{-2}{39};
				\rcross{0}{40};\ncross{2}{40};\ncross{3}{40};\ncross{4}{40};\ncross{5}{40};\ncross{6}{40};\ncross{7}{40};\ncross{-1}{40};\ncross{-2}{40};
			\end{scope}
		\end{scope}
		
		\begin{scope}[shift={(10,3)}]
			\ncross{0}{39};\ncross{1}{39};\ucross{2}{39};\ncross{4}{39};\ncross{5}{39};\ncross{6}{39};\ncross{-1}{39};\ncross{-2}{39};\ncross{-3}{39};
			\ncross{0}{40};\ncross{1}{40};\ncross{2}{40};\ucross{3}{40};\ncross{5}{40};\ncross{6}{40};\ncross{-1}{40};\ncross{-2}{40};\ncross{-3}{40};
			\ncross{0}{41};\ncross{1}{41};\ucross{2}{41};\ncross{4}{41};\ncross{5}{41};\ncross{6}{41};\ncross{-1}{41};\ncross{-2}{41};\ncross{-3}{41};
			\ncross{0}{42};\rcross{1}{42};\ncross{3}{42};\ncross{4}{42};\ncross{5}{42};\ncross{6}{42};\ncross{-1}{42};\ncross{-2}{42};\ncross{-3}{42};
			\ncross{0}{43};\ncross{1}{43};\ncross{2}{43};\ocross{3}{43};\ncross{5}{43};\ncross{6}{43};\ncross{-1}{43};\ncross{-2}{43};\ncross{-3}{43};
			\ncross{0}{44};\ncross{1}{44};\ocross{2}{44};\ncross{4}{44};\ncross{5}{44};\ncross{6}{44};\ncross{-1}{44};\ncross{-2}{44};\ncross{-3}{44};
			\ncross{0}{45};\ncross{1}{45};\ncross{2}{45};\ncross{3}{45};\ncross{4}{45};\ncross{5}{45};\ncross{6}{45};\ncross{-1}{45};\ncross{-2}{45};\ncross{-3}{45};
	\end{scope}

		\node at (-1,9) {$L_{1}$};
		\node at (-1,8) {$L_{3}$};
		\node at (-1,7) {$L_{7}$};
		\node at (-1,6) {$L_{8}$};
		\node at (-1,5) {$L_{9}$};
		\node at (-1,4) {$L_{11}$};
		\node at (-1,3) {$L_{4}$};
		\node at (-1,2) {$L_{10}$};
		\node at (-1,1) {$L_{2}$};
		\node at (-1,0) {$L_{6}$};
		
	\end{scope}
\end{tikzpicture}
	\caption{Generic braided wiring diagram of $\mathcal{N}^+$ \label{Generic_WD_N+}}
\end{figure}

\subsubsection{Method to compute the invariant}\mbox{}

Let $\A=\set{D_1,\cdots,D_n}$ be a line arrangement, $\xi$ be a character and $\gamma_{(r,s,t)}$ be the cycle define by:
 \begin{equation*}
	\begin{tikzpicture}[xscale=1.5]
		\node (A1) at (0,0) {$v_{D_r}$};
		\node (A2) at (1,0) {$v_{P_{r,s}}$};
		\node (A3) at (2,0) {$v_{D_s}$};
		\node (A4) at (3,0) {$v_{P_{s,t}}$};
		\node (A5) at (4,0) {$v_{D_{t}}$};
		\node (A6) at (5,0) {$v_{P_{r,t}}$};
		\draw[->] (A1)  -- (A2);
		\draw[->] (A2) -- (A3);
		\draw[->] (A3) -- (A4);
		\draw[->] (A4) -- (A5);
		\draw[->] (A5) -- (A6);
		\draw[->] (A6) to[out=110,in=0] (3.5,0.75) -- (1.5,0.75) to[out=180,in=80] (A1);
	\end{tikzpicture}
\end{equation*}
We assume that $(\A,\xi,\gamma_{(r,s,t)})$ is an inner-cyclic arrangement. Let $W_\A$ be a wiring diagram of $\A$ such that the line $L_r$ is consider (in $W_\A$) as the line at infinity.

To compute the invariant $\mathcal{I}(\mathcal{A},\xi,\gamma_{(r,s,t)})$, we consider the singular braid formed by the part of $W_\A$ from the left of the diagram to the intersection point of $L_s$ and $L_t$ (excluding this point). Then, we construct a usual braid $\sigma^\A_{(r,s,t)}\in\BB_{n-1}$ by exchanging the singular points by a positive local half-twist as illustrated in Figure~\ref{half_twist}. 

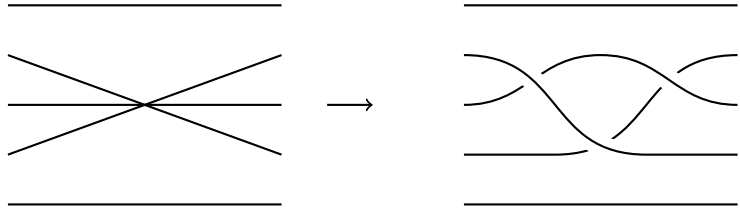
\begin{figure}[h!]
	\centering
	\begin{tikzpicture}
	\begin{scope}[xscale=0.6,yscale=0.33]
		\begin{scope}[shift={(-4,0)},xscale=1.5]
			\draw[thick] (-2,4) -- (2,4);
			\draw[thick] (-2,2) -- (2,-2);
			\draw[thick] (-2,0) -- (2,0);
			\draw[thick] (-2,-2) -- (2,2);
			\draw[thick] (-2,-4) -- (2,-4);
			
		\end{scope}
		
		\draw[thick,->] (0,0) -- (1,0);
		
		\begin{scope}[shift={(3,-2)}]
			\draw[thick] (0,6) -- (6,6);
			
			\draw[thick] (0,0) to[out=0,in=180] (2,0) to[out=0,in=180] (6,4);
			\draw[thick,color=white,line width=8pt] (0,2) to[out=0,in=180] (3,4) to[out=0,in=180] (6,2);
			\draw[thick] (0,2) to[out=0,in=180] (3,4) to[out=0,in=180] (6,2);
			\draw[thick,color=white,line width=8pt] (0,4) to[out=0,in=180] (4,0) to[out=0,in=180] (6,0);
			\draw[thick] (0,4) to[out=0,in=180] (4,0) to[out=0,in=180] (6,0);
			
			\draw[thick] (0,-2) -- (6,-2);
		\end{scope}
	\end{scope}
\end{tikzpicture}
	\caption{Construction of the braid $\sigma^\A_{(r,s,t)}$ \label{half_twist}}
\end{figure}

\begin{rmk}
	The braid $\sigma^\A_{(r,s,t)}$ is, in fact, the conjugating braid of any expression of the braid monodromy associated with $L_s\cap L_t$.
\end{rmk}

Finally~\cite[Proposition 4.3]{ArtFloGue} implies that the invariant is the image by $\xi$ of:
\begin{equation}\label{homological_inlcusion}
	\sum\limits_{i=0}\limits^{n} a_{i,s}(\sigma^\A_{(r,s,t)})\alpha_i - \sum\limits_{i=0}\limits^{n} a_{i,t}(\sigma^\A_{(r,s,t)})\alpha_i,
\end{equation}
where $a_{i,j}(\sigma^\A_{(r,s,t)})$ count with sign how many times the string associated with $D_i$ crosses over the string associated with $D_j$ in $\sigma^\A_{(r,s,t)}$. The braid $\sigma^\A_{(r,s,t)}$ is oriented from the left to the right, and the sign of the crossing is illustrated in Figure~\ref{Crossing_Sign}. For more details on the computation of the invariant see~\cite[Section 4]{ArtFloGue}.

\begin{figure}[h!]
	\centering
	\begin{tikzpicture}
	\begin{scope}
		
		\begin{scope}[shift={(-3,0)}]
			\draw[thick,->] (-1,0) -- (1,2);
			\draw[thick,->,line width=10pt, color=white] (-1,2) -- (1,0);
			\draw[thick,->] (-1,2) -- (1,0);
			\node at (0,-1) {\small{(\textsc{a}) Positive crossing}};
		\end{scope}
				
		\begin{scope}[shift={(3,0)}]
			\draw[thick,->] (-1,2) -- (1,0);		
			\draw[thick,->,line width=10pt, color=white] (-1,0) -- (1,2);
			\draw[thick,->] (-1,0) -- (1,2);
			\node at (0,-1) {\small{(\textsc{b}) Negative crossing}};
		\end{scope}
		
	\end{scope}
\end{tikzpicture}
	\caption{ Sign of the crossing in $\sigma^\A_{(r,s,t)}$ \label{Crossing_Sign}}
\end{figure}
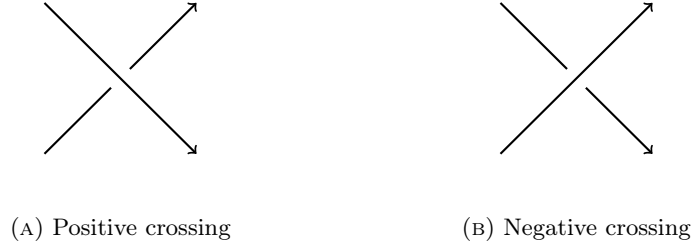

\begin{rmk}
	By convention, for all $i$, $a_{i,i}=0$.
\end{rmk}

\begin{rmk}
	Since $D_r$ is a fiber of $p$, then no line can over crossing it. Then, we do not need consider this line in the previous computation, and only what's happen to $D_s$ and $D_t$ determine the value of the invariant.
\end{rmk}

\subsubsection{Computation for $\mathcal{N}^+$ and $\mathcal{M}^+$}\mbox{}

To apply the previous algorithm at $\mathcal{N}^+$ and $\mathcal{M}^+$, we take $D_r=L_5$, $D_s=L_6$ and $D_t=L_{11}$; and the character $\xi$ considered is:
\begin{equation*}
		\xi:(L_1,\cdots,L_{11}) \longmapsto (\zeta,\zeta^4,\zeta^3,\zeta^2,1,1,\zeta,\zeta^2,\zeta^3,\zeta^4,1),
\end{equation*}
By Proposition~\ref{Propo_N_M_inner_cylic}, $(\mathcal{N}^+,\xi,\gamma_{(5,6,11)})$ and $(\mathcal{M}^+,\xi,\gamma_{(5,6,11)})$ are inner-cylic arrangements. In the following, we completely detailed the computation of $\mathcal{I}(\mathcal{N}^+,\xi,\gamma_{(5,6,11)})$.

$\bullet$ For $\mathcal{N}^+$: The braid $\sigma^{\mathcal{N}^+}_{(5,6,11)}$ obtain from its generic braided wiring diagram (see  Figure~\ref{Generic_WD_N+}), is pictured in Figure~\ref{braid_N}.
\begin{figure}[h!]
	\centering
	\begin{tikzpicture}
	\begin{scope}[xscale=0.45,yscale=0.5]
	
	\node at (-1,1) {$L_6$};
	\node at (-1,2) {$L_2$};
	\node at (-1,3) {$L_{10}$};
	\node at (-1,4) {$L_4$};
	\node at (-1,5) {$L_{11}$};
	\node at (-1,6) {$L_9$};
	\node at (-1,7) {$L_8$};
	\node at (-1,8) {$L_7$};
	\node at (-1,9) {$L_3$};
	\node at (-1,10) {$L_1$};
	
		\nc{0}{10};
		
		\oc{5}{1}{10};
		\oc{4}{2}{10};
		
		\oc{3}{3}{10};
		
		\oc{3}{4}{10};
		\uc{4}{5}{10};
		\uc{5}{6}{10};
		
		\oc{5}{7}{10};
		\oc{4}{8}{10};
		\oc{5}{9}{10};
		
		\oc{4}{10}{10};
		\oc{5}{11}{10};
		\uc{1}{12}{10};
		\uc{3}{13}{10};
		\uc{2}{14}{10};
		
		\oc{7}{15}{10};
		\oc{6}{16}{10};
		\oc{5}{17}{10};
		\oc{4}{18}{10};
		\oc{3}{19}{10};
		\oc{4}{20}{10};
		\oc{2}{21}{10};
		\oc{1}{22}{10};
		\oc{2}{23}{10};
		
		\uc{2}{24}{10};
		\uc{6}{25}{10};
		\uc{4}{26}{10};
		\oc{3}{27}{10};
		\uc{5}{28}{10};
		\uc{4}{29}{10};
		\uc{3}{30}{10};
		
		\nc{31}{10};
		
		\draw[thick,dotted] (3.5,3.5) circle (0.66);
		\draw[thick,dotted] (8.5,5) circle (1.5);
		\draw[thick,dotted] (15.5,7.5) circle (0.66);
		\draw[thick,dotted] (16.5,6.5) circle (0.66);
		\draw[thick,dotted] (17.5,5.5) circle (0.66);
		\draw[thick,dotted] (19.5,4) circle (1.5);
		\draw[thick,dotted] (22.5,2) circle (1.5);
		
	\end{scope}
\end{tikzpicture}
	\caption{The braid $\sigma^{\mathcal{N}^+}_{(5,6,11)}$ \label{braid_N}}
\end{figure}

\begin{rmk}
	The circled crossings come from the exchange of the singular points of $W_{\mathcal{N}^+}$ into local half-twists.
\end{rmk}

To determine the value of~(\ref{homological_inlcusion}) we proceed in a two-fold manner. Firstly, we add (with sign) the meridians of the lines over crossing the wire 6. Secondly, we subtract (with sign) the meridians of the lines over crossing the wire 11.

The wire 6 is over crossed:
\begin{itemize}
	\item[-] two times by the wire 10 (one positively and one negatively),
	\item[-] one time positively by the wire 7.
\end{itemize}

The wire 11 is over crossed:
\begin{itemize}
	\item[-] three times by the wire 9 (twice positively and one negatively),
	\item[-] one time negatively by the wire 10,
	\item[-] two times by the wire 6 (one positively and one negatively),
	\item[-] one time positively by the wire 7.
\end{itemize}

Thus, we obtain that:
\begin{equation*}
	\mathcal{I}(\mathcal{N}^+,\xi,\gamma_{(5,6,11)})=\xi\big( (\alpha_7) - (\alpha_9-\alpha_{10}+\alpha_7)\big ) = \zeta.
\end{equation*}

$\bullet$ For $\mathcal{M}^+$: From a perturbation of the non-generic braided wiring diagram pictured in Figure~\ref{11_prime_B}, we construct the braid $\sigma^{\mathcal{M}^+}_{(5,6,11)}$. It is pictured in Figure~\ref{braid_M}.

\begin{figure}[h!]
	\centering
	\begin{tikzpicture}
	\begin{scope}[xscale=0.45,yscale=0.5]
	
		\node at (-1,1) {$L_6$};
		\node at (-1,2) {$L_9$};
		\node at (-1,3) {$L_8$};
		\node at (-1,4) {$L_2$};
		\node at (-1,5) {$L_{10}$};
		\node at (-1,6) {$L_4$};
		\node at (-1,7) {$L_{11}$};
		\node at (-1,8) {$L_3$};
		\node at (-1,9) {$L_7$};
		\node at (-1,10) {$L_1$};
		
		\nc{0}{10};
		\oc{6}{1}{10};
		\uc{3}{2}{10};
		\oc{5}{3}{10};
		
		\oc{7}{4}{10};
		\oc{6}{5}{10};
		\oc{5}{6}{10};
		\oc{4}{7}{10};
		\oc{5}{8}{10};
		\oc{3}{9}{10};
		\oc{2}{10}{10};
		\oc{3}{11}{10};
		\oc{1}{12}{10};
		\oc{2}{13}{10};
		\oc{3}{14}{10};
		
		\oc{3}{15}{10};
		\uc{6}{16}{10};
		\uc{5}{17}{10};
		\uc{4}{18}{10};
		\uc{7}{19}{10};
		\uc{6}{20}{10};
		
		\oc{6}{21}{10};
		\oc{5}{22}{10};
		\oc{6}{23}{10};
		
		\uc{6}{24}{10};
		\oc{7}{25}{10};
		\uc{5}{26}{10};
		\oc{4}{27}{10};
		\oc{5}{28}{10};
		\oc{4}{29}{10};
		\uc{5}{30}{10};
		
		\nc{31}{10};

		\draw[thick,dotted] (4.5,7.5) circle (0.66);
		\draw[thick,dotted] (5.5,6.5) circle (0.66);
		\draw[thick,dotted] (7.5,5) circle (1.5);
		\draw[thick,dotted,xscale=1.5] (8,2.75) circle (2.25);
		\draw[thick,dotted] (22.5,6) circle (1.5);
		
	\end{scope}
\end{tikzpicture}
	\caption{The braid $\sigma^{\mathcal{M}^+}_{(5,6,11)}$ \label{braid_M}}
\end{figure}

\begin{rmk}\mbox{}
	\begin{enumerate}
		\item The perturbation applied to obtain the generic braided wiring diagram from the non-generic one pictured in Figure~\ref{11_prime_B} is such that the three vertical lines have very negative slope.  
		\item The circled crossings come from the exchange of the singular points of $W_{\mathcal{M}^+}$ into local half-twists.
	\end{enumerate}
\end{rmk}

After computation, we obtain that:
\begin{equation*}
	\mathcal{I}(\mathcal{M}^+,\xi,\gamma_{(5,6,11)})=\xi\big( (\alpha_3+\alpha_9+\alpha_2) - (\alpha_3) \big)=\zeta^2.
\end{equation*}

By~\cite[Proposition 2.5]{ArtFloGue} we know that the invariant and the complex conjugation commute, then:
\begin{equation*}
	\mathcal{I}(\mathcal{N}^-,\xi, \gamma_{(5,6,11)})=\overline{\zeta}=\zeta^4 \quad \text{and} \quad \mathcal{I}(\mathcal{M}^-,\xi, \gamma_{(5,6,11)})=\overline{\zeta^2}=\zeta^3.
\end{equation*}

By Theorem~\ref{Thm_TopologicalInvariant}, we have proved Theorem~\ref{Thm_OrientedOrderedPair}. Thus, to delete the ``orientation-preserving'' condition of Theorem~\ref{Thm_TopologicalInvariant}, we consider the following lemma. 
\begin{lem}
	Let $\A_1$ and $\A_2$ be two arrangements	with the same combinatorics and such that there is no homeomorphism preserving both orientation and order between $(\CC\PP^2,\A_1)$ and $(\CC\PP^2,\A_2)$. If there is no orientation-preserving homeomorphism between $\A_2$ and the complex conjugate of $\A_1$ then there is no order-preserving homeomorphism between $(\CC\PP^2,\A_1)$ and $(\CC\PP^2,\A_2)$.
\end{lem}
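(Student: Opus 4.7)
The plan is to argue by contradiction. Suppose there does exist an order-preserving homeomorphism
\[ \phi : (\CC\PP^2, \A_1) \longrightarrow (\CC\PP^2, \A_2). \]
By the standing hypothesis that no homeomorphism between these two pairs preserves both orientation and order, such a $\phi$ must then be orientation-\emph{reversing}.

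Next I would bring in the complex conjugation $c : \CC\PP^2 \to \CC\PP^2$. As noted in the remark following Theorem~\ref{Thm_OrientedOrderedPair}, $c$ is an orientation-reversing self-homeomorphism of $\CC\PP^2$; moreover, sending each line $L_i$ of $\A_1$ to its conjugate $\overline{L_i}$ in $\overline{\A_1}$, it induces, under the canonical ordering of $\overline{\A_1}$, an order-preserving homeomorphism
\[ c : (\CC\PP^2, \A_1) \longrightarrow (\CC\PP^2, \overline{\A_1}). \]
The composition $\phi \circ c^{-1} : (\CC\PP^2, \overline{\A_1}) \to (\CC\PP^2, \A_2)$ is then a homeomorphism, and being the composition of two orientation-reversing maps it is orientation-preserving. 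This contradicts the hypothesis that no orientation-preserving homeomorphism between $\A_2$ and $\overline{\A_1}$ exists, and the contradiction finishes the proof.

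The argument is essentially formal, so I do not anticipate any real obstacle. The only two points to verify are that the composition of two orientation-reversing homeomorphisms is orientation-preserving (immediate) and that complex conjugation interchanges $\A_1$ with $\overline{\A_1}$ in an order-preserving way (clear from the explicit equations in Subsection~\ref{Subsection_Realization}). The content of the lemma is simply that, having already ruled out an orientation-preserving homeomorphism to the complex-conjugate arrangement, one is allowed to trade the ``orientation-preserving'' hypothesis of Theorem~\ref{Thm_TopologicalInvariant} for an ``order-preserving'' one.
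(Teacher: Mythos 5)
Your argument is correct. The paper does not spell out a proof of this lemma, referring instead to \cite[Theorem~4.19]{ArtCarCogMar_Topology} and \cite[Theorem~6.4.8]{Gue_Thesis}; what you have written is the expected direct argument. The reasoning is sound: any order-preserving homeomorphism $(\CC\PP^2,\A_1)\to(\CC\PP^2,\A_2)$ must reverse orientation by the first hypothesis, and composing it with the inverse of the orientation-reversing, order-preserving conjugation map $c:(\CC\PP^2,\A_1)\to(\CC\PP^2,\overline{\A_1})$ yields an orientation-preserving homeomorphism $(\CC\PP^2,\overline{\A_1})\to(\CC\PP^2,\A_2)$, contradicting the second hypothesis. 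It is worth noting that your composition $\phi\circ c^{-1}$ is in fact also order-preserving, so your argument actually establishes a slightly sharper statement in which the second hypothesis is weakened to ``no homeomorphism preserving \emph{both} orientation and order between $(\CC\PP^2,\A_2)$ and $(\CC\PP^2,\overline{\A_1})$.'' That weaker hypothesis is precisely what Theorem~\ref{Thm_OrientedOrderedPair} furnishes for the pairs under consideration, and hence is what is really used when deducing Corollary~\ref{Cor_OrderedPair}.
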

It is a consequence of~\cite[Theorem~4.19]{ArtCarCogMar_Topology} (see also~\cite[Theorem 6.4.8]{Gue_Thesis} for a complete proof). Applying this lemma at Theorem~\ref{Thm_OrientedOrderedPair} (\emph{i.e.} to the pairs $(\CC\PP^2,\mathcal{N}^\pm)$ and $(\CC\PP^2,\mathcal{M}^\pm)$), we obtain Corollary~\ref{Cor_OrderedPair}.

\section*{Acknowledgments}
This result is an improvement of a result obtained during my PhD supervised by E.~\textsc{Artal} and V.~\textsc{Florens}. Thanks to them for their supervisions. The author would also like to thank M.A.~\textsc{Marco Buzun{\'a}riz} for his list of combinatorics which was the starting point for this research into \textsc{Zariski} pair.

\bibliographystyle{amsplain}
\bibliography{biblio}

\end{document}